\documentclass[11pt,reqno]{amsproc}
\title[]{Global Regularity for Nernst-Planck-Navier-Stokes Systems with Mixed Boundary Conditions}

\author{Fizay-Noah Lee}
\address{Program in Applied and Computational Mathematics, Princeton University, Princeton, NJ 08544}
\email{fizaynoah@princeton.edu}

\usepackage{setspace}
\usepackage[margin=1in]{geometry}
\usepackage{amsmath, amsthm, amssymb}
\usepackage{times}
\usepackage{color}
\usepackage{hyperref}
\usepackage{comment}

\newcommand{\pa}{\partial}

\newcommand{\la}{\label}
\newcommand{\fr}{\frac}
\newcommand{\na}{\nabla}
\newcommand{\be}{\begin{equation}}
\newcommand{\ee}{\end{equation}}
\newcommand{\bal}{\begin{aligned}}
\newcommand{\eal}{\end{aligned}}
\newcommand{\ba}{\begin{array}{l}}
\newcommand{\ea}{\end{array}}

\newtheorem{thm}{Theorem}
\newtheorem{prop}{Proposition}
\newtheorem{lemma}{Lemma}
\newtheorem{rem}{Remark}
\newtheorem{defi}{Definition}

\renewcommand{\div}{{\mbox{div}\,}}
\newcommand{\D}{\Delta}

\newcommand{\tci}{{\tilde{c}_i}}
\newcommand{\tcj}{{\tilde{c}_j}}

\date{today}

\begin{document}
\begin{abstract}
We consider electrodiffusion of ions in fluids, described by the Nernst-Planck-Navier-Stokes system, in three dimensional bounded domains, with mixed blocking (no-flux) and selective (Dirichlet) boundary conditions for the ionic concentrations and Robin boundary conditions for the electric potential, representing the presence of an electrical double layer. We prove global existence of strong solutions for large initial data in the case of two oppositely charged ionic species. The result hold unconditionally in the case where fluid flow is described by the Stokes equations. In the case of Navier-Stokes coupling, the result holds conditionally on Navier-Stokes regularity. We use a simplified argument to also establish global regularity for the case of purely blocking boundary conditions for the ionic concentrations for two oppositely charged ionic species and also for more than two species if the diffusivities are equal and the magnitudes of the valences are also equal.
\end{abstract}
\keywords{electroconvection, ionic electrodiffusion, Nernst-Planck, Navier-Stokes, electrical double layer}

\noindent\thanks{\em{MSC Classification:  35Q30, 35Q35, 35Q92.}}
\maketitle
\section{Introduction}
We study the \textit{Nernst-Planck-Navier-Stokes} (NPNS) system in a connected, bounded domain $\Omega\subset\mathbb{R}^3$ with smooth boundary, which models the electrodiffusion of ions in a fluid in the presence of boundaries. The ions diffuse under the influence of their own concentration gradients and are transported by the fluid and an electric field, which is generated by the local charge density and an externally applied potential. The fluid is forced by the electrical force exerted by the ionic charges. The time evolution of the ionic concentrations is determined by the \textit{Nernst-Planck} equations,
\be
\pa_t c_i+u\cdot\na c_i=D_i\div(\na c_i+z_ic_i\na\Phi),\quad i=1,...,m\la{np}
\ee
coupled to the Poisson equation
\be
-\epsilon\D\Phi=\sum_{i=1}^m z_ic_i=\rho\la{pois}
\ee
and to the \textit{Navier-Stokes} system,
\be
\pa_t u+u\cdot\na u -\nu\D u+\na p=-K\rho\na\Phi,\quad \div u=0\la{nse}
\ee
or to the \textit{Stokes} system
\be
\pa_t u-\nu\D u+\na p=-K\rho\na\Phi,\quad \div u=0.\la{stokes}
\ee
In this latter case we refer to the system as the \textit{Nernst-Planck-Stokes} (NPS) system.

The function $c_i$ is the local ionic concentration of the $i$-th species, $u$ is the fluid velocity, $p$ is the pressure, $\Phi$ is a rescaled electrical potential, and $\rho$ is the local charge density. The constant $z_i\in\mathbb{R}$ is the ionic valence of the $i$-th species. The constants $D_i>0$ are the ionic diffusivities, and $\epsilon>0$ is a rescaled dielectric permittivity of the solvent, and it is proportional to the square of the Debye length $\lambda_D$, which is the characteristic length scale of the electrical double layer in a solvent \cite{rubibook}. The constant $K>0$ is a coupling constant given by the product of Boltzmann's constant $k_B$ and the temperature $T_K$. Finally, $\nu>0$ is the kinematic viscosity of the fluid. The dimensional counterparts of $\Phi$ and $\rho$ are given by $(k_BT_K/e)\Phi$ and $e\rho$, respectively, where $e$ is elementary charge.

For the ionic concentrations $c_i$ we consider \textit{blocking} (no-flux) boundary conditions,
\be
(\pa_n c_i(x,t)+z_ic_i(x,t)\pa_n\Phi(x,t))_{|\pa\Omega}=0 \la{bl}
\ee
where $\pa_n$ is the outward normal derivative along $\pa\Omega$. This boundary condition represents a surface that is impermeable to the $i$-th ionic species. For regular enough solutions, blocking boundary conditions imply that the total concentration $\int_\Omega c_i\,dx$ is conserved as can formally be seen by integrating (\ref{np}) over $\Omega$. For $c_i$, we also consider \textit{selective} (Dirichlet) boundary conditions,
\be
c_i(x,t)_{|\pa\Omega}=\gamma_i>0,\la{DI}
\ee
which, in electrochemistry \cite{davidson,rubishtil}, represents an ion-selective (permselective) membrane that maintains a fixed concentration of ions.

The boundary conditions for the Navier-Stokes (or Stokes) equations are \textit{no-slip},
\be
u(x,t)_{|\pa\Omega}=0.\la{noslip}
\ee
The boundary conditions for $\Phi$ are inhomogeneous Robin,
\be
(\pa_n\Phi(x,t)+\tau\Phi(x,t))_{|\pa\Omega}=\xi(x).\la{robin}
\ee
This boundary condition represents the presence of an electrical double layer at the interface of a solvent and a surface \cite{prob,rubibook}. The Robin boundary conditions are derived based on the fact that the double layer has the effect of a plate capacitor. The constant $\tau>0$ represents the capacitance of the double layer, and $\xi:\partial\Omega\to\mathbb{R}$ is a smooth function that represents an externally applied potential on the boundary (see also \cite{bothe,fischer,gajewski,lee} where the same boundary conditions are used in similar contexts).

In this paper, we discuss the question of global regularity of solutions of NPNS and NPS. The NPNS system is a semilinear parabolic system, and in general, such systems can blow up in finite time. For example, The Keller-Segel equations, which share some common features with NPNS (e.g. the dissipative structure, Section {\ref{de}}), are known to admit solutions that blow up in finite time, even in two dimensions, for large initial conditions \cite{bedro}. The NPNS system includes the Navier-Stokes equations, where the question of large data global regularity, as is well known, is unresolved in three dimensions \cite{cf}. So for NPNS, we cannot expect at this stage to obtain affirmative results on unconditional global regularity. However, global regularity in three dimensions for the NPS system or even the Nernst-Planck equations, not coupled to fluid flow, is still an open problem except in some special cases, the main obstacle being control of the nonlinear term $\div(c_i\na\Phi)$.

In both the physical and mathematical literature, many different boundary conditions are considered for the concentrations $c_i$ and the electric potential $\Phi$, all with different physical meanings. The choice of boundary conditions makes a large difference not only when it comes to determining global regularity, but also in characterizing long time behavior. Global existence and stability of solutions to the uncoupled Nernst-Planck equations is obtained in \cite{biler,biler2,choi,gajewski} for blocking boundary conditions in two dimensions. The full NPNS system is discussed in \cite{schmuck} where the electric potential is treated as a superposition of an internal potential, determined by the charge density $\rho$ and homogeneous Neumann boundary conditions, and an external, prescribed potential. In this case global weak solutions are obtained in both two and three dimensions. The case of blocking and selective (Dirichlet) boundary conditions for the concentrations and Dirichlet boundary conditions for the potential are considered in \cite{ci} for two dimensions, and the authors obtain global existence of strong solutions and, in the case of blocking or \emph{uniformly} selective boundary conditions, unconditional stability. In \cite{np3d}, these results are extended to three dimensions, for initial conditions that are small perturbations of the steady states. In \cite{bothe,fischer} the authors consider blocking boundary conditions for $c_i$ and Robin boundary conditions for $\Phi$, as we do in this paper, and obtain global regularity and stability in two dimensions and global weak solutions in three dimensions. In \cite{cil} global regularity in three dimensions is obtained in the case of Dirichlet boundary conditions for both the concentrations and the potential. In \cite{liu}, the authors establish global existence of weak solutions in the case of no boundaries, $\mathbb{R}^3$.

As established and used effectively in the works referred to in the previous paragraph, the NPNS system, equipped with blocking or uniformly selective (c.f. \cite{ci}) boundary conditions for $c_i$, comes with a dissipative structure (Section \ref{de}), which in particular leads to stable asymptotic behaviors. Deviations from blocking or uniformly selective boundary conditions are known to lead, in general, to instabilities when a large enough electric potential drop is imposed across the spatial domain (e.g. narrow channel). These so-called electrokinetic instabilities (EKI) are observed both experimentally and numerically, and verified analytically through simplified models \cite{davidson,pham,rubinstein,rubizaltz,zaltzrubi}.

It is partially these observed instabilities that motivate our current study. One of the simplest configurations for which unstable and complex flow behavior and patterns are observed is when the boundaries exhibit ion-selectivity i.e. many surfaces (membranes) that arise in biology, chemistry, and electrochemistry allow for penetration of certain ions while blocking others \cite{davidson,rubinstein}. Mathematically, this situation can be modelled by mixed boundary conditions wherein, for example, $c_1$ has selective boundary conditions and $c_2$ has blocking boundary conditions. In considering these mixed boundary conditions in three dimensions, the main mathematical difficulties include
\begin{enumerate}
    \item nonlinear, nonlocal boundary conditions (blocking)
    \item supercriticality of the nonlinear, nonlocal flux terms, $\div(c_i\na\Phi)$
    \item lack of natural dissipative structure.
\end{enumerate}
We compare our situation with related works \cite{ci}, \cite{bothe}, and \cite{cil}. In \cite{ci} and \cite{bothe} the two dimensional setting allowed for control of the nonlinear term $\div(c_i\na\Phi)$ in a large variety of situations, including blocking boundary conditions for $c_i$ and Dirichlet \cite{ci} and Robin \cite{bothe} boundary conditions for $\Phi$. In \cite{ci}, 2D global regularity is shown for mixed boundary conditions, too. However, many important steps of the analysis do not carry over to three dimensions due to the difference in scaling. This is what we mean when we say that $\div(c_i\na\Phi)$ is supercritical in three dimensions. On the other hand, in \cite{cil}, the three dimensional setting is considered and global regularity is established when Dirichlet boundary conditions are prescribed for $\Phi$ and also for all $c_i$. The issue of the supercriticality of the nonlinearity is circumvented by, simply put, transferring all the potentially ``bad" nonlinear behavior to the boundary where the behavior is a priori controlled due to the boundary conditions. An important ingredient of the analysis is that the boundary conditions give control of \textit{both} $c_1$ and $c_2$ on the boundary. This is not the case for blocking or mixed boundary conditions.

Our current work culminates in Theorem \ref{gr!!} in Section \ref{mbc}, where we consider the NPNS and NPS systems for two oppositely charged ionic species with mixed boundary conditions for $c_i$ (selective for $c_1$ and blocking for $c_2$) and Robin boundary conditions for $\Phi$. We prove large data global regularity, unconditionally for NPS and conditional on the regularity of the fluid velocity $u$ for NPNS. The general strategy is similar to \cite{cil} in that we transfer all the harmful nonlinearities to the boundary. However, the main difference is that in the mixed boundary conditions scenario, the boundary behavior is not a priori controlled the same way as it is in \cite{cil}. Thus, a careful analysis is necessary to show that the internal dissipative ``forces" of the system are strong enough to counteract the potentially problematic boundary behavior. A novel ingredient used at this stage is control of the quantity $\|c_1(t)\|_{L^1(\Omega)}$. Aside from this control, the Robin boundary conditions for $\Phi$ play an important role - while they do not prescribe the values of $\Phi$ or of $\pa_n\Phi$ on the boundary, they do weaken the nonlinearity at the boundary just enough so that dissipation dominates. A close inspection of the proof reveals that replacing the Robin boundary conditions on $\Phi$ with Dirichlet boundary conditions (as in \cite{ci,np3d,cil}) does not allow for the same proof to go through. Thus the problem of global regularity for blocking and mixed boundary conditions for $c_i$ with Dirichlet boundary conditions for $\Phi$ is, in general, open for three dimensions. On the other hand, the proof also reveals how much the analysis can be simplified if Neumann boundary conditions were chosen for $\Phi$ (see e.g. \cite{schmuck}). Robin boundary conditions are situated appropriately in between Dirichlet and Neumann boundary conditions in such a way that they still allow us to take into account applied electric potentials on the boundary (a feature that makes Robin and Dirichlet boundary conditions appealing for the study of the aforementioned electrokinetic instabilities), while retaining some of the mathematically simplifying features of Neumann boundary conditions. {Ultimately, taking Robin boundary conditions to be a physically suitable description of the electrical field at the boundary, Theorem \ref{gr!!} reveals that, in the physically relevant case of three spatial dimensions and assuming sufficient regularity of the fluid velocity field $u$, the NPNS equations do not admit solutions that blow up in finite time (e.g. Dirac mass type aggregation of ions in finite time), and solutions in fact remain regular for all positive time.}

Leading up to Section \ref{mbc}, in Sections \ref{GR} and \ref{mss}, we consider, respectively, a two species and multiple species setting where all the $c_i$ satisfy blocking boundary conditions {and prove global regularity of solutions, unconditionally for NPS and conditional on the regularity of the fluid velocity $u$ for NPNS. In the latter setting of multiple species, }we require additionally that the diffusivities and magnitudes of ionic valences are equal ($D_1=...=D_m$, $|z_1|=...=|z_m|$) (see also \cite{cil}).  Because all the $c_i$ satisfy blocking boundary conditions, there is a natural dissipative structure (Section \ref{de}), which facilitates the analysis, but unlike in the two dimensional case \cite{bothe}, this dissipation alone seems insufficient to prove global regularity. Rather the dissipation must be supplemented by precise control of the boundary behavior using the Robin boundary conditions on $\Phi$. {On one hand, we consider these cases of \textit{only} blocking boundary conditions for $c_i$ because 
energy estimates in these two sections are more concise relative to the mixed boundary conditions case, and they more clearly illustrate the role played by the Robin boundary conditions and the subsequent estimates of boundary terms. On the other hand, the results of these two sections are nontrivial in their own right and also serve to verify that, despite the impenetrable nature of the boundaries (modelled by blocking boundary conditions), if the fluid velocity field remains regular, then blow up of ions near the boundary (or anywhere in the domain) cannot occur in finite time, regardless of the size of the prescribed data for the electrical potential $\Phi$.}

Prior to the proofs of the main theorems, in Section \ref{prelim}, we introduce the relevant function spaces and and state a local existence theorem, the proof of which we omit but can be found in the references provided.

\section{Preliminaries}\la{prelim}
We are concerned with global existence of strong solutions of NPNS and NPS. To define what we mean by a strong solution, we first introduce the relevant function spaces.

We denote by $L^p(\Omega)=L^p$ and $W^{m,p}(\Omega)=W^{m,p}$ the standard Lebesgue spaces and Sobolev spaces, respectively. In the case $p=2$, we denote $W^{m,2}=H^m(\Omega)=H^m$. We also consider Lebesgue spaces on the boundary $\pa\Omega$: $L^p(\pa\Omega)$. In this latter case, we always indicate the underlying domain $\pa\Omega$ to avoid ambiguity. We also denote $L^p_tL^q_x=L^p(0,T;L^q(\Omega))$, $L^p_tW^{m,q}_x=L^p(0,T;W^{m,q}(\Omega)),$ where the time $T$ is made clear from context.

Denoting $\mathcal{V}=\{f\in (C_c^\infty(\Omega))^3\,|\, \div f =0\}$, the spaces $H\subset (L^2(\Omega))^3$ and $V\subset (H^1(\Omega))^3$ are the closures of $\mathcal{V}$ in $(L^2(\Omega))^3$ and $(H^1(\Omega))^3$, respectively. The space $V$ is endowed with the Dirichlet norm $\|f\|_V^2=\int_\Omega |\na f|^2\,dx$.

In order to avoid having to explicitly deal with the pressure term in the Navier-Stokes and Stokes equations, we sometimes work with the equations projected onto the space of divergence free vector fields via the Leray projector $\mathbb{P}:L^2(\Omega)^3\to H$,
\begin{align}
    \pa_t u+B(u,u)+\nu Au=-K\mathbb{P}(\rho\na\Phi)\la{nse'}\\
    \pa_t u+\nu Au=-K\mathbb{P}(\rho\na\Phi)\la{stokes'}
\end{align}
where $A=-\mathbb{P}\D:\mathcal{D}(A)\to H$, $\mathcal{D}(A)=H^2(\Omega)^3\cap V$ is the Stokes operator, and $B(u,u)=\mathbb{P}(u\cdot\na u)$ (see \cite{cf} for related theory).

\begin{defi}
We say that $(c_i,\Phi,u)$ is a strong solution of NPNS (\ref{np}),(\ref{pois}),(\ref{nse}) or of NPS (\ref{np}),(\ref{pois}),(\ref{stokes}) with boundary conditions (\ref{bl}) (or (\ref{DI})),(\ref{noslip}),(\ref{robin}) on the time interval $[0,T]$ if $c_i\ge 0$, $c_i\in L^\infty(0,T;H^1)\cap L^2(0,T;H^2)$, $u\in L^\infty(0,T;V)\cap L^2(0,T;\mathcal{D}(A))$ and $(c_i,\Phi,u)$ solve the equations in the sense of distributions and satisfy the boundary conditions in the sense of traces.\la{strong}
\end{defi}

The NPNS/NPS system is a semilinear parabolic system and local existence and uniqueness of strong solutions have been established by many authors for many different sets of boundary conditions. We refer the reader to \cite{bothe} where local well-posedness is established for dimensions greater than one, arbitrarily many ionic species, blocking boundary conditions for $c_i$, and Robin boundary conditions for $\Phi$. However, as the authors remark, the proof, based on methods of maximal $L^p$ regularity, can be adapted in a straightforward manner for different boundary conditions, including the mixed boundary conditions consiidered later in Section \ref{mbc}. Thus we have the following local existence theorem:

\begin{thm}
For initial conditions $0\le c_i(0)\in H^1$, $u(0)\in V$, there exists $T_0>0$ depending on $\|c_i(0)\|_{H^1},\|u(0)\|_V$, the boundary data $\tau,\xi$ (and $\gamma_i$ if (\ref{DI}) is considered), the parameters of the problem $D_i,z_i,\epsilon,\nu,K$, and the domain $\Omega$ such that NPNS (\ref{np}),(\ref{pois}),(\ref{nse}) (and NPS (\ref{np}),(\ref{pois}),(\ref{stokes})) has a unique strong solution $(c_i,\Phi,u)$ on the time interval $[0,T_0]$ satisfying the boundary conditions (\ref{bl}) (or (\ref{DI})),(\ref{noslip}),(\ref{robin}).\la{local}
\end{thm}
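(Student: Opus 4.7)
The plan is to prove local well-posedness via a Banach fixed-point argument in a parabolic function space, treating the electrostatic and transport couplings as perturbations of linear diffusion and Stokes flow. First, I would isolate the linear building blocks. The Robin-Poisson problem $-\epsilon\D\Phi=\rho$ with $(\pa_n\Phi+\tau\Phi)_{|\pa\Omega}=\xi$ is coercive for $\tau>0$ and, by elliptic regularity on the smooth bounded domain $\Omega$, furnishes a bounded solution operator $\rho\mapsto\Phi$ from $L^2$ into $H^2$ and from $H^1$ into $H^3$. The heat operator $\pa_t-D_i\D$ with Neumann-type boundary conditions, and the Stokes operator $A$ with no-slip boundary conditions, both admit maximal $L^2$ regularity on $(0,T)\times\Omega$, producing solutions in precisely the spaces appearing in Definition \ref{strong}.

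Next, I would set up an iteration map $\mathcal{F}$. Let $Y_T$ denote the product space in which strong solutions of Definition \ref{strong} live, namely $(L^\infty(0,T;H^1)\cap L^2(0,T;H^2))^m$ for the concentrations together with $L^\infty(0,T;V)\cap L^2(0,T;\mathcal{D}(A))$ for the velocity. For a candidate $(\c_i,\tilde u)$ in a closed ball of $Y_T$, I would compute $\tilde\Phi$ from the Robin-Poisson solver applied to $\tilde\rho=\sum_iz_i\c_i$, then solve the decoupled linear problems $\pa_tc_i-D_i\D c_i=-\tilde u\cdot\na\c_i-D_iz_i\,\div(\c_i\na\tilde\Phi)$ for each $c_i$ with blocking boundary condition $(\pa_nc_i+z_i\c_i\pa_n\tilde\Phi)_{|\pa\Omega}=0$ (or the Dirichlet condition in the selective case), and $\pa_tu+B(\tilde u,\tilde u)+\nu Au=-K\mathbb{P}(\tilde\rho\na\tilde\Phi)$ (or the Stokes analogue) for $u$, with the prescribed initial data. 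Set $\mathcal{F}(\c_i,\tilde u):=(c_i,u)$.

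Third, I would close the estimates using Hölder's inequality and three-dimensional Sobolev embeddings. The crucial nonlinear product $\c_i\na\tilde\Phi$ is controlled in $L^2_tH^1_x$ by the $Y_T$-norm of $(\c_i,\tilde u)$, via the embedding $H^2\hookrightarrow L^\infty$ and the $L^\infty_tH^2_x\cap L^2_tH^3_x$ bound on $\tilde\Phi$ coming from the Poisson step; the transport term $\tilde u\cdot\na\c_i$ and the Lorentz force $\tilde\rho\na\tilde\Phi$ are handled analogously. Combined with the maximal-regularity bounds for the heat and Stokes operators, this yields an estimate of the form $\|\mathcal{F}(\c_i,\tilde u)\|_{Y_T}\le C_0+CT^\alpha\,(1+\|(\c_i,\tilde u)\|_{Y_T})^2$ with $C_0$ depending on the data and some $\alpha>0$, so choosing $T_0$ sufficiently small keeps a ball of suitable radius invariant. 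The same multilinear structure yields the contraction estimate. Nonnegativity $c_i\ge 0$ is preserved at each iteration by the parabolic maximum principle, since the linear equation has divergence form, the boundary condition is exactly no-flux, and $c_i(0)\ge 0$; uniqueness of the limit follows from the contraction property or equivalently from a direct energy difference argument.

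The main obstacle is the alignment of regularities in three dimensions: one must verify that the gain from the Robin-Poisson solver, together with the $L^\infty$-control of $\na\tilde\Phi$ through Sobolev embedding, is sufficient to absorb the quadratic nonlinearities in the norms demanded by maximal parabolic regularity. As the author notes, the maximal $L^p$-regularity framework of \cite{bothe} is flexible enough to carry out this bookkeeping uniformly for blocking, selective, or mixed boundary conditions on $c_i$, which is why the same construction also covers the variants (\ref{DI}) used later in the paper.
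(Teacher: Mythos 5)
The paper itself does not prove Theorem \ref{local}; it cites \cite{bothe}, whose maximal-regularity fixed-point scheme is the same family of argument you sketch, so your overall strategy is reasonable. However, your proposal has a genuine gap at the nonnegativity step. In your iteration you freeze the previous iterate $\c_i$ in \emph{both} the drift term and the boundary condition, so the linear problem you solve at each stage is a heat equation for $c_i$ with a prescribed, sign-indefinite source $-\tilde u\cdot\na\c_i-D_iz_i\div(\c_i\na\tilde\Phi)$ and an \emph{inhomogeneous} Neumann condition $\pa_n c_i=-z_i\c_i\pa_n\tilde\Phi$. This equation is not in divergence form in the unknown and the boundary condition is not a no-flux condition for $c_i$'s own flux, so the parabolic maximum principle gives nothing: a heat equation with an arbitrary source and inhomogeneous Neumann data does not preserve nonnegativity, and the iterates will in general become negative. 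The paper flags precisely this point in the remark following Theorem \ref{local} (nonnegativity is ``not self-evident'' and is proved in \cite{bothe}), so the one-line appeal to the maximum principle is the step that fails as written.

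To repair it, either change the linearization so that the unknown carries its own drift and flux condition, i.e. solve $\pa_t c_i-D_i\div(\na c_i+z_ic_i\na\tilde\Phi)=-\tilde u\cdot\na c_i$ with $(\pa_n c_i+z_ic_i\pa_n\tilde\Phi)_{|\pa\Omega}=0$ (then truncation or testing with the negative part of $c_i$ does yield $c_i\ge0$ for each iterate, at the price of doing maximal regularity for a variable-coefficient operator), or prove nonnegativity a posteriori for the fixed point only, by multiplying the full nonlinear equation by the negative part of $c_i$ and using the genuine no-flux structure. A related technical point you underestimate: with your frozen boundary datum the Neumann data is time-dependent and inhomogeneous, so the maximal $L^2$-regularity estimate requires it in an anisotropic trace class in space-time together with a compatibility condition at $t=0$; this is more than routine bookkeeping, and it is one reason the variable $\tci=c_ie^{z_i\Phi}$, which converts the blocking condition into a homogeneous Neumann condition (as the paper exploits in Section \ref{gr}), or the homogeneous-flux linearization above, is the cleaner route. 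With one of these modifications the contraction argument you outline can be closed.
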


\begin{rem}
We stress that the nonnegativity of $c_i$ is included in our definition of a strong solution. That nonnegativity is propagated from nonnegative initial conditions $c_i(0)\ge 0$ is not self-evident. Its proof is included in Appendix \ref{pc}. In fact, as proved in Appendix \ref{pc}, more is true: strict positivity is propagated from strictly positive initial conditions $c_i(0)\ge c>0$.
\end{rem}

Henceforth, all occurrences of the constant $C>0$, with no subscript, refer to a constant depending only on the parameters of the system, the boundary data, and the domain $\Omega$, and this constant may differ from line to line. For brevity, when a constant, other than $C$, is said to depend on the parameters of the system, we mean this to also include boundary data and the domain.

\section{Global Regularity for Blocking Boundary Conditions (Two Species)}\la{GR}
In this section we consider the NPNS and NPS systems for two ($m=2$) oppositely charged ($z_1>0>z_2$) ionic species, both satisfying blocking boundary conditions. In this setting, we prove global existence of strong solutions for the NPS system and the same result, conditional on Navier-Stokes regularity, for the NPNS system.

We begin by proving some a priori bounds that are used for the proof of the global regularity result of this section. We prove some of the estimates in more generality (two or more species) so as to invoke them in Section \ref{mss} as well. In Sections \ref{mss} and \ref{mbc}, for the sake of brevity and fewer repetitive computations, we shall also frequently make references to some estimates from this section that may not hold verbatim but nonetheless hold up to some minor modifications.
\subsection{Dissipation Estimate}\la{de}
The NPNS and NPS systems come with a dissipative structure when blocking boundary conditions for $c_i$ and Robin boundary conditions for $\Phi$ are considered. We prove the following proposition:
\begin{prop}
Let $(c_i,\Phi,u)$ be a strong solution of NPNS or NPS on the time interval $[0,T]$, satisfying boundary conditions (\ref{bl}),(\ref{noslip}),(\ref{robin}). Then the functional
\be
V(t)=\fr{1}{2K}\|u\|_H^2+\sum_{i=1}^m\int_\Omega c_i\log c_i\,dx+\fr{\epsilon}{2}\|\na\Phi\|_{L^2(\Omega)}^2+\fr{\epsilon\tau}{2}\|\Phi\|_{L^2(\pa\Omega)}^2
\ee
satisfies
\be
\fr{d}{dt}V+\mathcal{D}+\fr{\nu}{K}\|\na u\|_{L^2}^2=0
\ee
for $t\in[0,T]$, where
\be
\mathcal{D}=\sum_{i=1}^m D_i\int_\Omega c_i|\na\mu_i|^2\,dx\ge 0
\ee
and $\mu_i$ is the electrochemical potential,
\be
\mu_i=\log c_i+z_i\Phi.\la{mu}
\ee
In particular, $V(t)$ is nonincreasing in time.\la{diss}
\end{prop}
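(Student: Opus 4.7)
The plan is to differentiate the four contributions to $V$ separately using equations (\ref{np}), (\ref{pois}), and (\ref{nse}) (or (\ref{stokes})), together with the boundary conditions (\ref{bl}), (\ref{noslip}), (\ref{robin}), and to show that the resulting cross-terms cancel so that only $-\mathcal{D}$ and $-\fr{\nu}{K}\|\na u\|_{L^2}^2$ survive. The main obstacle will be the inhomogeneous Robin data $\xi$, which naively produces a boundary contribution $\epsilon\int_{\pa\Omega}\xi\,\pa_t\Phi\,dS$ that is absent from the target identity; its cancellation requires a symmetry argument based on Green's second identity.

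For the kinetic energy, testing the momentum equation against $u/K$ and using $\div u=0$ together with no-slip removes the pressure term and (in the NPNS case) the inertial nonlinearity, giving
\[
\fr{1}{2K}\fr{d}{dt}\|u\|_H^2+\fr{\nu}{K}\|\na u\|_{L^2}^2=-\int_\Omega\rho\,u\cdot\na\Phi\,dx.
\]
For the entropy, I would rewrite (\ref{np}) as $\pa_tc_i+u\cdot\na c_i=D_i\div(c_i\na\mu_i)$ and test against $1+\log c_i$: the transport piece equals $\int u\cdot\na(c_i\log c_i)\,dx$, which vanishes by $\div u=0$ and no-slip, while the diffusion boundary integral vanishes since $c_i\na\mu_i\cdot n=\pa_nc_i+z_ic_i\pa_n\Phi=0$ by (\ref{bl}). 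Writing $\na c_i=c_i\na\mu_i-z_ic_i\na\Phi$ and summing in $i$ yields
\[
\fr{d}{dt}\sum_i\int_\Omega c_i\log c_i\,dx=-\mathcal{D}+\sum_iD_iz_i\int_\Omega c_i\na\Phi\cdot\na\mu_i\,dx.
\]
For the electrostatic pair, direct differentiation followed by integration by parts using (\ref{pois}) and (\ref{robin}) gives
\[
\fr{d}{dt}\left[\fr{\epsilon}{2}\|\na\Phi\|_{L^2}^2+\fr{\epsilon\tau}{2}\|\Phi\|_{L^2(\pa\Omega)}^2\right]=\int_\Omega\rho\,\pa_t\Phi\,dx+\epsilon\int_{\pa\Omega}\xi\,\pa_t\Phi\,dS,
\]
which contains the problematic $\xi$ boundary term.

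The resolution is to observe that $\xi$ is time-independent, so $\pa_t\Phi$ solves $-\epsilon\D\pa_t\Phi=\pa_t\rho$ with the \emph{homogeneous} Robin condition $\pa_n\pa_t\Phi+\tau\pa_t\Phi=0$. Green's second identity applied to the pair $(\Phi,\pa_t\Phi)$ then yields $\int_\Omega\rho\,\pa_t\Phi\,dx=\int_\Omega\Phi\,\pa_t\rho\,dx-\epsilon\int_{\pa\Omega}\xi\,\pa_t\Phi\,dS$, which exactly absorbs the unwanted boundary contribution. To evaluate $\int\Phi\,\pa_t\rho\,dx$, I multiply (\ref{np}) by $z_i\Phi$, sum, and integrate by parts: the transport term yields $\int\rho\,u\cdot\na\Phi\,dx$ (using $u|_{\pa\Omega}=0$ and $\div u=0$), and the diffusion boundary term vanishes once more by (\ref{bl}), leaving $\int\Phi\,\pa_t\rho\,dx=\int\rho\,u\cdot\na\Phi\,dx-\sum_iD_iz_i\int_\Omega c_i\na\Phi\cdot\na\mu_i\,dx$. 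Summing the three derivatives, the pair of $\int\rho\,u\cdot\na\Phi$ terms cancels, as does the pair of $\sum_iD_iz_i\int c_i\na\Phi\cdot\na\mu_i$ terms, and what remains is exactly $\fr{d}{dt}V+\mathcal{D}+\fr{\nu}{K}\|\na u\|_{L^2}^2=0$; monotonicity of $V$ follows immediately since $\mathcal{D}\ge 0$ and $\|\na u\|_{L^2}^2\ge 0$.
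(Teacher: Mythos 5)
Your proof is correct, and it rests on exactly the two cancellations that drive the paper's argument: the drift term $\int_\Omega\rho\,u\cdot\nabla\Phi\,dx$ cancels against the fluid forcing after testing the momentum equation with $u/K$, and the cross term $\sum_iD_iz_i\int_\Omega c_i\nabla\Phi\cdot\nabla\mu_i\,dx$ cancels between the entropy balance and the electrostatic balance. The route differs in bookkeeping. The paper tests the Nernst--Planck equation once, against the full electrochemical potential $\mu_i=\log c_i+z_i\Phi$, so the electric contribution appears as $\int_\Omega\Phi\,\partial_t\rho\,dx$, and the Robin data is handled by differentiating (\ref{robin}) in time, $\partial_t\partial_n\Phi=-\tau\partial_t\Phi$; the inhomogeneity $\xi$ therefore never appears. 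You instead differentiate $V$ term by term, which produces $\int_\Omega\rho\,\partial_t\Phi\,dx$ together with the boundary term $\epsilon\int_{\partial\Omega}\xi\,\partial_t\Phi\,dS$, and you eliminate the latter by Green's second identity for the pair $(\Phi,\partial_t\Phi)$. This is the same use of the time-independence of $\xi$, repackaged as a symmetrization of $\int\rho\,\partial_t\Phi$ versus $\int\Phi\,\partial_t\rho$; your version makes the role of the inhomogeneous Robin data explicit, while the paper's choice of test function avoids the symmetrization step altogether. One caveat you share with the paper but should state: testing with $1+\log c_i$ (or $\mu_i$) is formal where $c_i$ vanishes, and the same regularization the paper indicates, replacing $\log c_i$ by $\log(c_i+\delta)$ and letting $\delta\to0$, is needed to make your entropy step rigorous; similarly, your manipulations of $\partial_t\Phi$ and $\partial_t\rho$ are at the same level of formality as the paper's treatment of $\partial_t(\Delta\Phi)$, so no extra regularity is being assumed beyond what the paper itself uses.
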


\begin{proof}
First we note that (\ref{np}) can be written
\be
\pa_t c_i+u\cdot \na c_i=D_i\div(c_i\na\mu_i).\la{np'}
\ee
Then we multiply (\ref{np'}) by $\mu_i$ and integrate by parts. This part is somewhat formal as we cannot exclude the possibility that $c_i$ attains the value $0$, in which case $\log c_i$ becomes undefined. Thus, to make this rigorous, we can work instead with the quantity $\mu_i^\delta=\log(c_i+\delta)+z_i\Phi$ and later pass to the limit $\delta\to 0$, as done in \cite{bothe}. For conciseness, we stick to the formal computations involving $\mu_i$. On the right hand side of (\ref{np'}), we obtain after summing in $i$,
\be
-\sum_{i=1}^m D_i\int_\Omega c_i|\na\mu_i|^2\,dx.
\ee
which is precisely $-\mathcal{D}$. For the terms on the left hand side, we have after summing in $i$ and integrating by parts,
\be
\bal
\sum_{i=1}^m\int_\Omega \pa_t c_i(\log c_i+z_i\Phi)\,dx&=\fr{d}{dt}\sum_{i=1}^m\int_\Omega c_i\log c_i-c_i\,dx+\int_\Omega(\pa_t\rho)\Phi\,dx\\
&=\fr{d}{dt}\sum_{i=1}^m\int_\Omega c_i\log c_i\,dx-\epsilon\int_\Omega\pa_t(\D\Phi)\Phi\,dx\\
&=\fr{d}{dt}\sum_{i=1}^m\int_\Omega c_i\log c_i\,dx-\epsilon\int_{\pa\Omega}\pa_t(\pa_n\Phi)\Phi\,dS+\fr{\epsilon}{2}\fr{d}{dt}\int_\Omega|\na\Phi|^2\,dx\\
&=\fr{d}{dt}\sum_{i=1}^m\int_\Omega c_i\log c_i\,dx+\epsilon\tau\int_{\pa\Omega}(\pa_t\Phi)\Phi\,dS+\fr{\epsilon}{2}\fr{d}{dt}\int_\Omega|\na\Phi|^2\,dx\\
&=\fr{d}{dt}\sum_{i=1}^m\int_\Omega c_i\log c_i\,dx+\fr{\epsilon\tau}{2}\fr{d}{dt}\int_{\pa\Omega}\Phi^2\,dS+\fr{\epsilon}{2}\fr{d}{dt}\int_\Omega|\na\Phi|^2\,dx.
\eal
\ee
In the second line, we used the fact that $\|c_i(t)\|_{L^1}=\|c_i(0)\|_{L^1}$ for all time due to blocking boundary conditions. We also used the Poisson equation for $\Phi$. In the fourth line, we used the Robin boundary conditions (\ref{robin}).

Lastly, for the advective term we obtain after summing,
\be
\bal
\sum_{i=1}^m\int_\Omega u\cdot\na c_i(\log c_i+z_i\Phi)\,dx=&\sum_{i=1}^m\int_\Omega u\cdot\na(c_i\log c_i-c_i)\,dx+\int_\Omega (u\cdot\na\rho)\Phi\,dx\\
=&\int_\Omega (u\cdot\na\rho)\Phi\,dx\\
=&-\int_\Omega (u\cdot\na\Phi)\rho\,dx
\eal
\ee
where in the second and third lines we integrated by parts and used the fact that $\div u=0$. Collecting what we have so far, we have
\be
\fr{d}{dt}\left(\sum_{i=1}^m\int_\Omega c_i\log c_i\,dx+\fr{\epsilon}{2}\|\na\Phi\|_{L^2}^2+\fr{\epsilon\tau}{2}\|\Phi\|_{L^2(\pa\Omega)}^2\right)+\mathcal{D}=\int_\Omega(u\cdot\na\Phi)\rho\,dx.\la{one}
\ee
Next we multiply (\ref{nse}) (or (\ref{stokes})) by $u$ and integrate by parts, noticing that the integral corresponding to the nonlinear term for Navier-Stokes vanishes due to the divergence-free condition,
\be
\fr{1}{2}\fr{d}{dt}\|u\|_{L^2}^2+\nu\|\na u\|_{L^2}^2=-K\int_\Omega (u\cdot\na\Phi)\rho\,dx.\la{two}
\ee
Thus, multiplying (\ref{two}) by $K^{-1}$ and adding it to (\ref{one}), we obtain the conclusion of the proposition.
\end{proof}

\subsection{Uniform $L^2$ Estimate}\la{UL2}
Using the dissipative estimate from the previous subsection, we obtain uniform in time control of $\|c_i\|_{L^2}$  in the case of two oppositely charged species satisfying blocking boundary conditions.

\begin{prop}
Let $(c_i,\Phi,u)$ be a strong solution of NPNS or NPS for two oppositely charged species ($m=2,\, z_1>0>z_2$) on the time interval $[0,T]$, satisfying boundary conditions (\ref{bl}),(\ref{noslip}),(\ref{robin}), and corresponding to initial conditions $0\le c_i(0)\in H^1,\,u(0)\in V$. Then there exists a constant $M_2>0$ independent of time, depending only on the parameters of the system and the initial conditions such that for each $i$
\be
\sup_{t\in[0,T]}\|c_i(t)\|_{L^2}<M_2.\la{m2m2}
\ee\la{L2'}
\end{prop}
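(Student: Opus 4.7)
The plan is to obtain a weighted $L^2$ differential inequality for the quantity $y(t):=\sum_{i=1}^2 \fr{|z_i|}{2D_i}\|c_i(t)\|_{L^2}^2$ by multiplying the $i$-th Nernst--Planck equation, written in flux form (\ref{np'}), by $\fr{|z_i|}{D_i}c_i$, integrating over $\Omega$, and summing in $i$. The advective term vanishes because $\div u=0$ and $u|_{\pa\Omega}=0$. Integration by parts, using the blocking condition $(\pa_n c_i+z_ic_i\pa_n\Phi)|_{\pa\Omega}=0$ and the Poisson equation $-\epsilon\D\Phi=\rho$, produces a Dirichlet dissipation, a boundary term involving $\pa_n\Phi$, and an interior cubic term. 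Summing yields
\[
\fr{d}{dt}y+\sum_{i=1}^2|z_i|\|\na c_i\|_{L^2}^2+\fr{1}{2\epsilon}\int_\Omega \rho\,(z_1^2c_1^2-z_2^2c_2^2)\,dx=-\fr{1}{2}\int_{\pa\Omega}(z_1^2c_1^2-z_2^2c_2^2)\,\pa_n\Phi\,dS.
\]

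The decisive step is an algebraic identity available precisely because $m=2$ and $z_1>0>z_2$: writing $\bar w:=z_1c_1+|z_2|c_2\ge0$, one has the factorization $z_1^2c_1^2-z_2^2c_2^2=(z_1c_1+z_2c_2)(z_1c_1-z_2c_2)=\rho\,\bar w$, so that the interior cubic contribution becomes $\fr{1}{2\epsilon}\int_\Omega\rho^2\bar w\,dx\ge0$. What would otherwise be a sign-indefinite source is actually an additional dissipation. The weights $|z_i|/D_i$ are tuned exactly to produce this cancellation, and this is also why the argument in this section is restricted to exactly two oppositely charged species.

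For the boundary integral I substitute $\pa_n\Phi=\xi-\tau\Phi$ and exploit that $\|\Phi(t)\|_{L^2(\pa\Omega)}$ is uniformly bounded in time, a consequence of Proposition \ref{diss} since the nonincreasing functional $V(t)$ controls $\fr{\epsilon\tau}{2}\|\Phi\|_{L^2(\pa\Omega)}^2$. Combining the trace inequality with the three-dimensional Gagliardo--Nirenberg-type estimate $\|c_i\|_{L^4(\pa\Omega)}^2\le C\|c_i\|_{L^2}^{1/2}\|c_i\|_{H^1}^{3/2}$ and Young's inequality, I bound the boundary term by $\delta\sum_i\|\na c_i\|_{L^2}^2+C_\delta\sum_i\|c_i\|_{L^2}^2$, and absorb the Dirichlet part into the left-hand side by choosing $\delta$ small.

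To close, I use the conservation $\|c_i(t)\|_{L^1}=\|c_i(0)\|_{L^1}$ (another consequence of the blocking condition) together with the Poincar\'e--Wirtinger inequality to bound $\|\na c_i\|_{L^2}^2\ge c(\|c_i\|_{L^2}^2-C_0)$. Substituting produces a differential inequality of the form $y'(t)+c'y(t)\le C'$, and Gronwall gives $y(t)\le y(0)e^{-c't}+C'/c'$ uniformly in $t$, proving (\ref{m2m2}). The main technical obstacle is the boundary-term estimate, where one must carefully balance the constants from trace interpolation and Young's inequality against the Dirichlet and Poincar\'e coercivity so that the Gronwall closes into a \emph{uniform}, rather than merely exponentially growing, bound; the algebraic cancellation that converts the interior cubic term into a dissipation is what makes the rest of the argument tractable, since without it the potentially bad source $\int c_i^2\rho\,dx$ would itself need to be absorbed by trace-type estimates with constants that in general do not close.
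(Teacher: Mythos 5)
Your overall strategy is the paper's: the same weighted multiplier $\fr{|z_i|}{D_i}c_i$, the same use of the Poisson equation and Robin condition, and the same decisive factorization $z_1^2c_1^2-z_2^2c_2^2=\rho(z_1c_1+|z_2|c_2)$ turning the cubic term into extra dissipation. The gap is in your treatment of the boundary term, and it is not a matter of bookkeeping: as written, your estimate does not yield the uniform-in-time bound (\ref{m2m2}). First, the interpolation inequality you invoke, $\|c_i\|_{L^4(\pa\Omega)}^2\le C\|c_i\|_{L^2}^{1/2}\|c_i\|_{H^1}^{3/2}$, is false: $L^4(\pa\Omega)$ is the endpoint trace space for $H^1(\Omega)$ in three dimensions (the trace of $H^{3/4}(\Omega)$ only lands in $L^{8/3}(\pa\Omega)$), and a boundary-layer bump of width $1/k$ with height $k$ violates the inequality as $k\to\infty$. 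The correct endpoint estimate (Lemma \ref{trace}, (\ref{T1}) with $p=4$) reads $\|c_i\|_{L^4(\pa\Omega)}^2\lesssim\|\na c_i\|_{L^2}^{1/2}\|c_i\|_{H^1}^{3/2}+\|c_i\|_{L^4(\Omega)}^2$, in which the top-order norm appears with total power $2$; since you paired it with the prefactor $\|\Phi\|_{L^2(\pa\Omega)}$, which is bounded but not small, the gradient contribution cannot be absorbed into the dissipation by choosing a small parameter.

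Second, and independently, even if you could arrange an estimate of the form $\delta\sum_i\|\na c_i\|_{L^2}^2+C_\delta\sum_i\|c_i\|_{L^2}^2$, the closing step fails: $C_\delta$ blows up as $\delta\to 0$ (Young's inequality gives $C_\delta\sim\delta^{-3}$ here), while the Poincar\'e constant in $\|\na c_i\|_{L^2}^2\ge c(\|c_i\|_{L^2}^2-C_0)$ is fixed, so there is no choice of $\delta$ making $(|z_i|-\delta)c>C_\delta$ in general. Gronwall then only gives $y'\le Cy+C'$, i.e.\ exponential growth, not the uniform bound you claim. The paper's proof avoids both problems by distributing the boundary integral differently: H\"older with $\|\Phi\|_{L^4(\pa\Omega)}\le c_4\|\Phi\|_{H^1(\Omega)}$ (uniformly bounded in time by Proposition \ref{diss} together with the Poincar\'e inequality for the Robin problem) and $\|c_i\|_{L^{8/3}(\pa\Omega)}^2$, which is strictly below the trace endpoint; there (\ref{T3}) gives $\delta\|\na c_i\|_{L^2}^2+C_\delta\|c_i\|_{L^1}^2$ with an arbitrarily small $\delta$ \emph{and} with the remainder in the conserved $L^1$ norm. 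The remainder is then a genuine constant, the gradient terms are absorbed, and the Gagliardo--Nirenberg bound $\|c_i\|_{L^2}^2\le C(\|\na c_i\|_{L^2}^2+\|c_i\|_{L^1}^2)$ converts the leftover dissipation into $y'\le -C'y+C''$, which is what produces a bound independent of $T$. If you rework your boundary estimate along these lines, the rest of your argument goes through as the paper's does.
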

\begin{proof}
We multiply (\ref{np}) by $\fr{|z_i|}{D_i}c_i$ and integrate by parts:
\be
\bal
\fr{|z_i|}{2D_i}\fr{d}{dt}\int_\Omega c_i^2\,dx=&-|z_i|\int_\Omega|\na c_i|^2\,dx-z_i|z_i|\int_\Omega c_i\na c_i\cdot\na\Phi\,dx\\
=&-|z_i|\int_\Omega|\na c_i|^2\,dx-z_i|z_i|\fr{1}{2}\int_\Omega \na c_i^2\cdot\na\Phi\,dx\\
=&-|z_i|\int_\Omega|\na c_i|^2\,dx-z_i|z_i|\fr{1}{2\epsilon}\int_\Omega c_i^2\rho\,dx- z_i|z_i|\fr{1}{2}\int_{\pa\Omega}c_i^2\pa_n\Phi\,dS\\
=&-|z_i|\int_\Omega|\na c_i|^2\,dx-z_i|z_i|\fr{1}{2\epsilon}\int_\Omega c_i^2\rho\,dx\\
&+z_i|z_i|\fr{\tau}{2}\int_{\pa\Omega}c_i^2\Phi\,dS- z_i|z_i|\fr{1}{2}\int_{\pa\Omega}c_i^2\xi\,dS\\
=&-|z_i|\int_\Omega|\na c_i|^2\,dx-z_i|z_i|\fr{1}{2\epsilon}\int_\Omega c_i^2\rho\,dx+I_1^{(i)}+I_2^{(i)}\la{ener}
\eal
\ee
where in the fourth line, we used the Robin boundary conditions (\ref{robin}). We estimate the two boundary integrals using trace inequalities (Lemma \ref{trace}, Appendix):
\be
\bal
|I_1^{(i)}|\le&C\|\Phi\|_{L^4(\pa\Omega)}\|c_i\|_{L^\fr{8}{3}(\pa\Omega)}^2\\
\le&\|\Phi\|_{H^1(\Omega)}(C_\delta\|c_i\|_{L^1(\Omega)}^2+\delta\|\na c_i\|_{L^2(\Omega)}^2)\la{I_1}
\eal
\ee
and similarly,
\be
\bal
|I_2^{(i)}|\le&\fr{|z_i|^2\|\xi\|_{L^\infty(\pa\Omega)}}{2}\|c_i\|_{L^2(\pa\Omega)}^2\le C_\delta\|c_i\|_{L^1}^2+\delta\|\na c_i\|_{L^2}^2.\la{I_2}
\eal
\ee
We recall that $\|c_i\|_{L^1}$ remains constant in time, and since by a generalized Poincaré's inequality we have that \be
\|\Phi\|_{L^2(\Omega)}\le C(\|\na\Phi\|_{L^2(\Omega)}+\|\Phi\|_{L^2(\pa\Omega)})
\ee
we deduce from Proposition \ref{diss} that $\|\Phi\|_{H^1}$ is uniformly bounded in time by initial data. Thus choosing 
\be
\delta=\min\left\{\fr{|z_i|}{4},\fr{|z_i|}{4\sup_t\|\Phi(t)\|_{H^1}}\right\}
\ee
we obtain from (\ref{ener})-(\ref{I_2}), after summing in $i$ and recalling $z_1>0>z_2$,
\be
\bal
\sum_{i=1}^2\fr{|z_i|}{2D_i}\fr{d}{dt}\|c_i\|_{L^2(\Omega)}^2+\sum_{i=1}^2\fr{|z_i|}{2}\|\na c_i\|_{L^2(\Omega)}^2&\le C_b-\fr{1}{2\epsilon}\int_\Omega (z_1^2c_1^2-z_2^2c_2^2)\rho\,dx\\
&= C_b-\fr{1}{2\epsilon}\int_\Omega \rho^2(|z_1|c_1+|z_2|c_2)\,dx\\
&\le C_b.\la{cancellation}
\eal
\ee
Here, $C_b$ depends on $\sup_t\|\Phi(t)\|_{H^1}, \|c_i(0)\|_{L^1}$ along with the various parameters of the system. Next we use a Gagliardo-Nirenberg inequality to bound 
\be
\|c_i\|_{L^2}^2\le C(\|\na c_i\|_{L^2}^2+\|c_i\|_{L^1}^2)\le C'(\|\na c_i\|_{L^2}^2+1),\la{poinc}
\ee
where $C'$ depends on $\|c_i(0)\|_{L^1}$. Thus, for constants $C'_b, C''_b\ge 0$ depending on $\sup_t\|\Phi(t)\|_{H^1}, \|c_i(0)\|_{L^1}$ and parameters, we obtain from (\ref{cancellation}),
\be
\fr{d}{dt}\left(\sum_{i=1}^2\fr{|z_i|}{2D_i}\|c_i\|_{L^2}^2\right)\le -C'_b\left(\sum_{i=1}^2\fr{|z_i|}{2D_i}\|c_i\|_{L^2}^2\right)+C''_b.\la{last}
\ee
Thus from a Grönwall estimate, we find
\be
\sum_{i=1}^2\fr{|z_i|}{2D_i}\|c_i(t)\|_{L^2}^2\le \left(\sum_{i=1}^2\fr{|z_i|}{2D_i}\|c_i(0)\|_{L^2}^2\right)e^{-C'_bt}+\fr{C''_b}{C'_b}(1-e^{-C'_bt})\le \sum_{i=1}^2\fr{|z_i|}{2D_i}\|c_i(0)\|_{L^2}^2+\fr{C''_b}{C'_b}
\ee
and (\ref{m2m2}) follows.\end{proof}

\subsection{Higher Order Estimates}\la{he} Now we bootstrap the dissipative and $L^2$ estimates to obtain some higher order estimates.
\begin{prop}
Let $(c_i,\Phi,u)$ be a strong solution of NPNS or NPS on the time interval $[0,T]$ with $0\le c_i(0)\in H^1\cap L^\infty,\,u(0)\in V$, satisfying boundary conditions (\ref{bl}),(\ref{noslip}),(\ref{robin}). Assume that for each $i$, $c_i$ satisfies a uniform in time $L^2$ bound,
\be
\|c_i(t)\|_{L^2}<M_2.\la{L2'''}
\ee
Then there exist constants $M_\infty,M'_2>0$ independent of time, depending only on the parameters of the system, the initial conditions and $M_2$ such that for each $i$
\begin{align}
\sup_{t\in[0,T]}\|c_i(t)\|_{L^\infty}&<M_\infty\la{Mi}\\
\int_{0}^{T}\|\na \tci(s)\|_{L^2}^2\,ds&< M'_2\la{L2''}
\end{align}
where $\tci=c_ie^{z_i\Phi}$. Specifically for the case of the NPS system, we have
\be
\sup_{t\in[0,T]}\|u(t)\|_V^2+\fr{1}{T}\int_0^T\|u(s)\|_{H^2}^2\,ds<B\la{Mu}
\ee
for a constant $B$ independent of time. For the NPNS system, we have instead
\be
\sup_{t\in[0,T]}\|u(t)\|_V^2+\int_0^T\|u(s)\|_{H^2}^2\,ds<B_T\la{M'u}
\ee
for a time dependent constant $B_T$ depending also on $U(T)$ where
\be
U(T)=\int_0^T\|u(s)\|_V^4\,ds.
\ee\la{L2}
\end{prop}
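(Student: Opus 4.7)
The plan consists of four ingredients: (i) regularity of $\Phi$ from the $L^2$ hypothesis, (ii) the $L^\infty$ bound on $c_i$, (iii) the $L^2_t\na\tci$ bound from the dissipation, and (iv) the $H^1$-in-$u$ estimates. Step (i) is immediate: $\|c_i(t)\|_{L^2}\le M_2$ yields $\rho\in L^\infty_tL^2$, so elliptic regularity for the Robin-Poisson equation gives $\Phi\in L^\infty_tH^2\hookrightarrow L^\infty_tL^\infty$ and $\na\Phi\in L^\infty_tL^6$ in three dimensions.

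The main obstacle is step (ii). I would multiply the Nernst-Planck equation (\ref{np}) by $\fr{|z_i|^{p-1}}{D_i}c_i^{p-1}$ and sum in $i=1,2$. The advection term drops by $\div u=0$ and $u|_{\pa\Omega}=0$; the boundary contributions from integrating the diffusion and drift by parts combine into a multiple of $\int_{\pa\Omega}(\pa_nc_i+z_ic_i\pa_n\Phi)c_i^{p-1}\,dS$, which vanishes by (\ref{bl}). The key algebraic observation is that, with $x=|z_1|c_1$ and $y=|z_2|c_2$,
\be
\sum_{i=1}^2|z_i|^{p-1}z_ic_i^p\cdot\rho=(x^p-y^p)(x-y)\ge 0,
\ee
so the cubic contribution $-\sum_i\fr{|z_i|^{p-1}z_i(p-1)}{p\epsilon}\int c_i^p\rho\,dx$ has the favorable sign and can be discarded. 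The remaining boundary integral $\int_{\pa\Omega}c_i^p(\xi-\tau\Phi)\,dS$ is controlled via a trace inequality and absorbed into $\|\na c_i^{p/2}\|_{L^2}^2$, as in the proof of Proposition \ref{L2'}. Combining with the three-dimensional Gagliardo-Nirenberg inequality $\|f\|_{L^2}^{10/3}\le C\|\na f\|_{L^2}^2\|f\|_{L^1}^{4/3}$ applied to $f=c_i^{p/2}$ yields a differential inequality controlling $\|c_i\|_{L^p}$ uniformly in time in terms of $\sup_t\|c_i\|_{L^{p/2}}$; iterating $p=2\to 4\to 8\to\cdots$ in the standard Moser fashion (the resulting recursion is contracting) produces the uniform $L^\infty$ bound $M_\infty$ depending only on $M_2$, $\|c_i(0)\|_{L^\infty}$, and the parameters.

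With $c_i$ (hence $\tci$, since $\Phi$ is bounded) in $L^\infty_{t,x}$, step (iii) follows immediately from Proposition \ref{diss}: since $\mu_i=\log\tci$ yields $c_i|\na\mu_i|^2=e^{-z_i\Phi}|\na\tci|^2/\tci$, one has
\be
\int_0^T\|\na\tci(s)\|_{L^2}^2\,ds\le C\|\tci\|_{L^\infty_{t,x}}\sup e^{z_i\Phi}\int_0^T\!\!\int_\Omega c_i|\na\mu_i|^2\,dx\,ds\le M'_2,
\ee
uniformly in $T$ because $V$ is bounded below ($x\log x\ge -1/e$), so the dissipation integral is bounded by $V(0)+C$.

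For step (iv), test the Leray-projected equation (\ref{stokes'}) or (\ref{nse'}) with $Au$. The electric forcing satisfies $\|K\mathbb{P}(\rho\na\Phi)\|_{L^2}\le CK\|\rho\|_{L^\infty}\|\na\Phi\|_{L^2}\le C$ from step (ii). For NPS this yields $\fr{d}{dt}\|u\|_V^2+\fr{\nu}{2}\|Au\|_{L^2}^2\le C$, which combined with $\|Au\|_{L^2}^2\ge\lambda_1\|u\|_V^2$ produces both the uniform-in-time $V$-bound and the time-averaged $H^2$ bound (\ref{Mu}). For NPNS the inertial term is estimated via the Agmon-type inequality $\|u\|_{L^\infty}\le C\|u\|_V^{1/2}\|u\|_{H^2}^{1/2}$ and Young's inequality to give $\fr{d}{dt}\|u\|_V^2+\fr{\nu}{2}\|Au\|_{L^2}^2\le C+C\|u\|_V^4\cdot\|u\|_V^2$; Grönwall in the form $y'\le C+g(t)y$ with $g=C\|u\|_V^4\in L^1_t$ (the hypothesis $U(T)<\infty$) closes this, proving (\ref{M'u}).
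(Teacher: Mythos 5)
Your steps (i), (iii) and (iv) coincide with the paper's proof: elliptic regularity from the assumed $L^2$ bound gives uniform-in-time control of $\|\Phi\|_{L^\infty}$ and $\|\na\Phi\|_{L^6}$; the bound (\ref{L2''}) follows from Proposition \ref{diss} exactly as you write, using $\mu_i=\log\tci$, the uniform bounds on $c_i$ and $\Phi$, and the fact that the dissipation is time-integrable because $V$ is bounded below; and the $Au$ energy estimates for Stokes and Navier--Stokes are precisely the paper's (\ref{nn})--(\ref{nnn}), including the Gr\"onwall step with $U(T)$.

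The divergence is in step (ii), and it creates a genuine gap with respect to the proposition as stated. Proposition \ref{L2} is formulated for an arbitrary number $m$ of species with arbitrary valences (only the $L^2$ bounds (\ref{L2'''}) are assumed), and it is invoked in that generality in Theorem \ref{grm} for $m>2$ species. Your $L^\infty$ argument, however, hinges on summing over exactly two oppositely charged species with the weights $|z_i|^{p-1}$ so that the cubic term becomes $(x^p-y^p)(x-y)\ge 0$; for $m\ge 3$ (e.g. $z_1=z_2=1=-z_3$) the corresponding quantity $(c_1^p+c_2^p-c_3^p)(c_1+c_2-c_3)$ has no sign, so your proof of (\ref{Mi}) does not cover the stated result. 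The restriction is also unnecessary: since your own step (i) already yields $\sup_t\|\na\Phi\|_{L^6}\le P_6$, you can treat each species separately, as the paper does in (\ref{kk}) --- multiply (\ref{np}) by $c_i^{2k-1}$, integrate by parts once against the full flux so that the boundary term vanishes by (\ref{bl}), and bound the remaining bulk drift term by H\"older as $C\|\na\Phi\|_{L^6}\|c_i^k\|_{L^3}\|\na c_i^k\|_{L^2}$. This avoids both the sign structure and the extra boundary integral $\int_{\pa\Omega}c_i^p(\tau\Phi-\xi)\,dS$ that your second integration by parts reintroduces and that you must then absorb via the trace inequality with $p$-dependent constants. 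In the two-species setting your scheme does close (the trace and interpolation constants grow only polynomially in $p$, so the Moser recursion converges, modulo the usual additive lower-order term in the Gagliardo--Nirenberg inequality on a bounded domain), but as written it proves a strictly weaker statement than the proposition.
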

\begin{rem}
For two oppositely charged species, $m=2$, $z_1>0>z_2$, the hypothesis (\ref{L2'''}) holds due to Proposition \ref{L2'}.
\end{rem}
\begin{rem}
Here, and in later theorems, the assumption that $c_i(0)\in L^\infty$ is not, strictly speaking, necessary as the local existence theorem guarantees that $H^1$ initial data is immediately regularized so that on the interval of existence $[0,T]$, we have $c_i\in L^2(0,T;H^2)$. In particular, for some arbitrarily small $\tilde t>0$ we have $c_i(\tilde t)\in H^2\subset L^\infty$. Thus, below, when we derive a priori upper bounds in terms of $\|c_i(0)\|_{L^\infty}$ (c.f. (\ref{sk})), we could instead do so in terms of $\|c_i(\tilde t)\|_{L^\infty}$. To avoid doing this, we include $c_i(0)\in L^\infty$ in the hypothesis. For later theorems (e.g. in Section \ref{mbc}) whose proofs do not invoke $\|c_i(0)\|_{L^\infty},$ we do note include $c_i\in L^\infty$ in the hypothesis.
\end{rem}
\begin{proof}
It follows from (\ref{pois}), (\ref{L2'''}) and Sobolev estimates that for some constants $P_6,\,p_\infty$ independent of time, we have,
\begin{align}
\|\na\Phi(t)\|_{L^6}&\le P_6\la{P6}\\
\|\Phi(t)\|_{L^\infty}&\le p_\infty\la{Piii}.
\end{align}

Now we deduce the uniform in time $L^\infty$ bounds, using a Moser-type iteration (see also \cite{bothe,choi,np3d,schmuck}). For $k=2,3,4...$, we multiply (\ref{np}) by $c_i^{2k-1}$ and integrate by parts to obtain
\be
\fr{1}{2k}\fr{d}{dt}\|c_i\|_{L^{2k}}^{2k}+\fr{2k-1}{k^2}D_i\|\na c_i^k\|_{L^2}^2\le C\fr{2k-1}{k}\|\na\Phi\|_{L^6}\|c_i^k\|_{L^3}\|\na c_i^k\|_{L^2}.\la{kk}
\ee
We use (\ref{P6}) and interpolate $L^3$ between $L^1$ and $H^1$ to obtain after a Young's inequality,
\be
\fr{d}{dt}\|c_i\|_{L^{2k}}^{2k}+D_i\|\na c_i^k\|_{L^2}^2\le C_k\|c_i^k\|_{L^1}^2\la{2k}
\ee
where $C_k$ satisfies, for some $c>0$, for some $m$ large enough and for each $k=2,3,4...$
\be
C_k\le ck^m.\la{ck}
\ee
Interpolating $L^2$ between $L^1$ and $H^1$,
\be
\|c_i^k\|_{L^2}^2\le C(\|\na c_i^k\|_{L^2}^2+\|c_i^k\|_{L^1}^2),
\ee
we obtain from (\ref{2k}),
\be
\fr{d}{dt}\|c_i\|_{L^{2k}}^{2k}\le -C\|c_i\|_{L^{2k}}^{2k}+C_k\|c_i^k\|_{L^1}^2=-C\|c_i\|_{L^{2k}}^{2k}+C_k\|c_i\|_{L^{k}}^{2k}\la{2k'}
\ee
for a different $C_k$ still satisfying $(\ref{ck})$ for some $c$. We define
\be
S_k=\max\{\|c_i(0)\|_{L^\infty},\,\sup_t\|c_i(t)\|_k\}.\la{sk}
\ee
Applying a Grönwall inequality to (\ref{2k'}), we obtain
\be
\|c_i\|_{L^{2k}}^{2k}\le\|c_i(0)\|_{L^{2k}}^{2k}+C_kS_k^{2k}\le|\Omega|\|c_i(0)\|_{L^\infty}^{2k}+C_kS_k^{2k}\le Ck^mS_k^{2k}\la{lel}
\ee
for a possibly different $C_k$ still satisfying $(\ref{ck})$ for some $c$.  Assuming without loss of generality that $C\ge 1$ in (\ref{lel}),
\be
S_{2k}=\max\{\|c_i(0)\|_{L^\infty},\sup_t\|c_i(t)\|_{L^{2k}}\}\le\max\{\|c_i(0)\|_{L^\infty},C^{\fr{1}{2k}}k^{\fr{m}{2k}}S_k\}=C^{\fr{1}{2k}}k^{\fr{m}{2k}}S_k.\la{ss}
\ee
Setting $k=2^j$, we obtain
\be
S_{2^{j+1}}\le C^{\fr{1}{2^{j+1}}}2^{\fr{jm}{2^{j+1}}}S_{2^j}
\ee
and thus for all $J\in\mathbb{N}$
\be
S_{2^J}\le C^a2^bS_2<\infty\la{s2j}
\ee
where
\be
a=\sum_{j=1}^\infty \fr{1}{2^{j+1}}<\infty,\quad b=\sum_{j=1}^\infty \fr{jm}{2^{j+1}}<\infty.
\ee
Passing $J\to\infty$ in (\ref{s2j}), we obtain (\ref{Mi}).

Next, (\ref{L2''}) follows from (\ref{Mi}), (\ref{Piii}) and Proposition \ref{diss}. Indeed, from the proposition, using the fact that $\mu_i=\log \tci$, we obtain
\be
\int_0^T\int_\Omega|\na \tci|^2\,dx\,dt\le C_p\int_0^T\int_\Omega c_i|\na\mu_i|^2\,dx\,dt\le M'_2
\ee
for $M'_2$ independent of time, and $C_p$ depending on $p_\infty$ and $M_\infty$. 

Next, to prove (\ref{Mu}), we multiply the Stokes equations (\ref{stokes'}) by $Au$ and integrate by parts,
\be
\bal
\fr{1}{2}\fr{d}{dt}\|u\|_V^2+\fr{\nu}{2}\|A u\|_{L^2}^2\le C\|\rho\na\Phi\|_{L^2}^2\le C'M_\infty^2\la{nn}
\eal
\ee
where $C'$ depends on $\sup_t\|\na\Phi\|_{L^2}$ (Proposition \ref{diss}). Using the elliptic bound $\|u\|_V\le C\|A u\|_{L^2}$, we obtain
\be
\fr{1}{2}\fr{d}{dt}\|u\|_V^2\le -C''\|u\|_V^2+C'M_\infty^2 \la{unps}
\ee
which gives us uniform boundedness of $\|u\|_V$, the first half of (\ref{Mu}). Integrating (\ref{nn}) in time gives us the second half.

Similarly, for NPNS, we multiply the Navier-Stokes equations (\ref{nse'}) by $A u$ and integrate by parts,
\be
\bal
\fr{1}{2}\fr{d}{dt}\|u\|_V^2+\fr{\nu}{2}\|A u\|_{L^2}^2\le& C'M_\infty^2+\|u\|_{L^6}\|\na u\|_{L^3}\|A u\|_{L^2}\\
\le& C'M_\infty^2 +C\|u\|_V^\fr{3}{2}\|A u\|_{L^2}^\fr{3}{2}
\eal
\ee
so that after a Young's inequality we obtain
\be
\bal
\fr{1}{2}\fr{d}{dt}\|u\|_V^2+\fr{\nu}{4}\|A u\|_{L^2}^2\le& C'M_\infty^2+C\|u\|_V^6\la{nnn}
\eal
\ee
from which we obtain
\be
\fr{1}{2}\|u(t)\|_{V}^2+\fr{\nu}{4}\int_0^t\|A u(s)\|_{L^2}^2\,ds\le\left(\fr{1}{2}\|u(0)\|_V^2+C'M_\infty^2t\right)e^{CU(T)}.
\ee
This gives us (\ref{M'u}) and completes the proof of the proposition.
\end{proof}

\subsection{Proof of Global Regularity}\la{gr}
Now we prove our main global regularity result of this section.
\begin{thm}
For initial conditions $0\le c_i(0)\in H^1$, $u(0)\in V$ and for all $T>0$, NPS (\ref{np}),(\ref{pois}),(\ref{stokes}) for two oppositely charged species ($m=2,\,z_1>0>z_2$) has a unique strong solution $(c_i,\Phi,u)$ on the time interval $[0,T]$ satisfying the boundary conditions (\ref{bl}),(\ref{noslip}),(\ref{robin}). NPNS (\ref{np}),(\ref{pois}),(\ref{nse}) for two oppositely charged species has a unique strong solution on $[0,T]$ satisfying the initial and boundary conditions provided
\be
U(T)=\int_0^T\|u(s)\|_V^4\,ds<\infty.
\ee
Moreover, the solution to NPS satisfies (\ref{Mu}) in addition to 
\begin{align}
\sup_{t\in[0,T]}\|c_i(t)\|_{H^1}^2+\fr{1}{T}\int_0^T\|c_i(s)\|_{H^2}^2\,ds&\le M\la{M}\\
\sup_{t\in[0,T]}\|\na\tci(t)\|_{L^2}^2+\int_0^T\|\D\tci(s) \|_{L^2}^2\,ds&\le M'\la{M'}
\end{align}
for $\tci=e^{z_i\Phi}$ and constants $M,M'$ depending only on the parameters of the system and the initial conditions but not on $T$. The solution to NPNS satisfies (\ref{M'u}) in addition to
\be
\sup_{t\in[0,T]}\|c_i(t)\|_{H^1}^2+\int_0^T\|c_i(s)\|_{H^2}^2\,ds\le M_T\la{MU}
\ee
for a constant $M_T$ depending on the parameters of the system, the initial conditions, $T$, and $U(T)$.\la{gr!}
\end{thm}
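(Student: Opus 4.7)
The plan is to combine the local existence theorem (Theorem \ref{local}) with the uniform a priori bounds from Propositions \ref{L2'} and \ref{L2}, supplemented by an $H^1/H^2$ energy estimate on $\tci=c_ie^{z_i\Phi}$, to deduce that the local solution extends to any $T>0$ (unconditionally for NPS, conditionally on $U(T)<\infty$ for NPNS). The auxiliary variable $\tci$ is convenient because the blocking condition (\ref{bl}) translates into a homogeneous Neumann condition $\pa_n\tci=0$.

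The preliminary step upgrades the regularity of $\Phi$: the uniform $L^\infty$ bound on $c_i$ from (\ref{Mi}) gives $\rho\in L^\infty$, and elliptic regularity for (\ref{pois})--(\ref{robin}) yields $\Phi\in L^\infty_tW^{2,p}_x$ for every $p<\infty$, so in particular $\na\Phi\in L^\infty_{t,x}$. A direct computation then shows that $\tci$ satisfies
\be
\pa_t\tci+u\cdot\na\tci-D_i\D\tci=z_i\tci(\pa_t\Phi+u\cdot\na\Phi)-D_iz_i\na\Phi\cdot\na\tci,
\ee
with $\pa_n\tci=0$ on $\pa\Omega$. I would multiply this by $-\D\tci$ and integrate. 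The Neumann condition makes the integration by parts clean and produces $\fr{1}{2}\fr{d}{dt}\|\na\tci\|_{L^2}^2+D_i\|\D\tci\|_{L^2}^2$ on the left. Three of the four source terms ($u\cdot\na\tci$, $\tci u\cdot\na\Phi$, $\na\Phi\cdot\na\tci$) are controlled by Hölder and Gagliardo-Nirenberg using the uniform bounds on $\tci$, $\na\Phi$ and the $V$-bound on $u$, absorbing a small fraction of $\|\D\tci\|_{L^2}^2$ into the left via Young.

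The delicate term is $\int z_i\tci(\pa_t\Phi)\D\tci\,dx$. To handle it, time-differentiate (\ref{pois}): the function $\pa_t\Phi$ satisfies $-\epsilon\D\pa_t\Phi=\pa_t\rho$ with homogeneous Robin boundary conditions. Rewriting (\ref{np}) in divergence form, $\pa_t c_i=\div(-uc_i+D_i\na c_i+D_iz_ic_i\na\Phi)=:\div J_i$, we get $\pa_t\rho=\div J$ with $J=\sum z_iJ_i$, and $J\cdot n=0$ on $\pa\Omega$ thanks to no-slip and blocking. Elliptic regularity for the Robin Laplacian with divergence-form source gives $\|\pa_t\Phi\|_{H^1}\le C\|J\|_{L^2}$, and the pointwise bounds on $\tci$, $\na\Phi$, $u$, together with $\na c_i=e^{-z_i\Phi}(\na\tci-z_i\tci\na\Phi)$, yield $\|J\|_{L^2}\le C(1+\|\na\tci\|_{L^2})$. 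Hölder with exponents $3,6,2$ and the Sobolev embedding $H^1\hookrightarrow L^6$ then bound the delicate term by $C(1+\|\na\tci\|_{L^2})\|\D\tci\|_{L^2}$, absorbable via Young.

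Collecting everything yields a differential inequality of the form
\be
\fr{d}{dt}\|\na\tci\|_{L^2}^2+D_i\|\D\tci\|_{L^2}^2\le C(1+\|u\|_V^4)(1+\|\na\tci\|_{L^2}^2).
\ee
For NPS, $\|u\|_V$ is uniformly bounded by (\ref{Mu}) and Grönwall gives (\ref{M'}); for NPNS, (\ref{M'u}) provides a $T$- and $U(T)$-dependent bound on $\|u\|_V$, producing the $T$-dependent version. The estimates (\ref{M}) and (\ref{MU}) on $c_i$ follow from $c_i=\tci e^{-z_i\Phi}$ and the uniform $W^{2,p}$ bound on $\Phi$. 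Combined with Theorem \ref{local} and the continuation criterion (boundedness of $\|c_i\|_{H^1}$ and $\|u\|_V$), these bounds preclude finite-time blow-up and yield global existence. The main obstacle is the delicate $\pa_t\Phi$ term, whose estimation requires the auxiliary time-differentiated Poisson step; for NPNS, the additional and expected obstacle is that the $V$-bound on $u$ itself is only conditional, which we inherit as the conditional nature of the result.
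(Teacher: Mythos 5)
Your strategy is essentially the paper's: the same auxiliary variable $\tci=c_ie^{z_i\Phi}$ with homogeneous Neumann conditions, the same $-\D\tci$ multiplier, the same Moser/elliptic-regularity input giving $\|c_i\|_{L^\infty}$ and $\|\na\Phi\|_{L^\infty}$ (Propositions \ref{L2'}, \ref{L2}), and the same treatment of the delicate $\pa_t\Phi$ term — your ``Robin Laplacian with divergence-form source'' estimate is precisely the paper's computation (\ref{ptg}), obtained by testing the time-differentiated Poisson equation with $\pa_t\Phi$ and using $J\cdot n=0$. With this, the global existence claims (unconditional for NPS, conditional on $U(T)<\infty$ for NPNS) and the time-dependent bound (\ref{MU}) do follow.

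There is, however, a genuine gap in the NPS part of the conclusion. The theorem asserts (\ref{M}) and (\ref{M'}) with constants $M,M'$ \emph{independent of $T$}, and your final step — Grönwall applied to $\fr{d}{dt}\|\na\tci\|_{L^2}^2\le C(1+\|u\|_V^4)(1+\|\na\tci\|_{L^2}^2)$ using only the uniform bound on $\|u\|_V$ — yields $\sup_{t\le T}\|\na\tci\|_{L^2}^2\lesssim e^{CT}$, which is not uniform in $T$. Two ingredients you dropped are needed. First, keep the lower-order terms in the form $\|u\|_V^2$ and $\|\na\tcj\|_{L^2}^2$ rather than absorbing them into a constant: the source $J$ obeys $\|J\|_{L^2}\le C(\|u\|_V+\sum_j\|\na\tcj\|_{L^2})$, not $C(1+\|\na\tci\|_{L^2})$ (note also $u$ is not in $L^\infty$; the flux term is bounded by $\|c_i\|_{L^\infty}\|u\|_{L^2}$), so the resulting inequality is of the paper's form (\ref{cru}), with right-hand side $a(t)\sum_j\|\na\tcj\|_{L^2}^2+b(t)\|u\|_V^2$ and $a$ uniformly bounded by (\ref{Mu}). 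Second, instead of a constant-rate Grönwall, integrate in time and use that both $\int_0^T\|\na\tcj\|_{L^2}^2\,ds\le M'_2$ (this is (\ref{L2''}), a consequence of the dissipation in Proposition \ref{diss} together with the $L^\infty$ bounds — an estimate your sketch never invokes) and $\int_0^T\|u\|_V^2\,ds\le C$ (also from Proposition \ref{diss}) hold with bounds independent of $T$. This is exactly how the paper passes from (\ref{cru}) to (\ref{M'}) and then (\ref{M}) (whose $H^2$ part is only a time average, $\fr1T\int_0^T$, precisely because the lower-order contributions grow linearly in $T$). As written, your argument proves global regularity but only with time-dependent bounds for NPS, which falls short of the stated uniform estimates.
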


\begin{proof}
We prove the a priori estimates (\ref{M})-(\ref{MU}), which together with the bounds on $u$, (\ref{Mu}) and (\ref{M'u}), and the local existence theorem allow us to uniquely extend a local solution to a global one by virtue of the fact that the strong norms in Definition \ref{strong} do not blow up before time $T$.

Due to (\ref{Mi}), it follows from the embedding $W^{2,\infty}\hookrightarrow W^{1,\infty}$ that
\be
\|\Phi(t)\|_{W^{1,\infty}}\le P_\infty\la{Pi}
\ee
for all $t$, where $P_\infty$ depends only on the parameters of the system and uniform $L^p$ bounds on $\rho$.

Next, in order to obtain estimates for $\na c_i$, we note that the auxiliary variable
\be
\tci=c_ie^{z_i\Phi}\la{cit}
\ee
satisfies
\be
\pa_t\tci+u\cdot\na\tci=D_i\D\tci-D_iz_i\na\tci\cdot\na\Phi+z_i((\pa_t+u\cdot\na)\Phi)\tci\la{npt}
\ee
together with homogeneous Neumann boundary conditions
\be
{\pa_n\tci}_{|\pa\Omega}=0.\la{neu}
\ee
Multiplying (\ref{npt}) by $-\D\tci$ and using (\ref{neu}) to integrate by parts, we obtain
\be
\bal
\fr{1}{2}\fr{d}{dt}\|\na\tci\|_{L^2}^2+D_i\|\D\tci\|_{L^2}^2=&\int_\Omega (u\cdot\na\tci)\D\tci\,dx+D_iz_i\int_\Omega(\na\tci\cdot\na\Phi)\D\tci\,dx\\
&-z_i\int_\Omega((\pa_t+u\cdot\na)\Phi)\tci\D\tci\,dx\\
=& I_1+I_2+I_3.\la{dci}
\eal
\ee
We estimate using Hölder and Young's inequalities and Sobolev and interpolation estimates,
\be
\begin{aligned}
|I_1|\le&\|u\|_V\|\na\tci\|_{L^3}\|\D\tci\|_{L^2}\\
\le& C\|u\|_V\|\na\tci\|_{L^2}^\fr{1}{2}\|\D \tci\|_{L^2}^\fr{3}{2}\\
\le&\fr{D_i}{4}\|\D \tci\|_{L^2}^2+C\|u\|_V^4\|\na\tci\|_{L^2}^2\la{id}
\end{aligned}
\ee

\be
\bal
|I_2|\le&C\|\na\Phi\|_{L^\infty}\|\na \tci\|_{L^2}\|\D\tci\|_{L^2}\\
\le&\fr{D_i}{4}\|\D\tci\|_{L^2}^2+C_g\|\na\tci\|_{L^2}^2\la{idd}
\eal
\ee
where $C_g$ depends on $P_\infty$.
Next we split
\be
I_3=-z_i\int_\Omega(u\cdot\na\Phi)\tci\D\tci\,dx-z_i\int_\Omega(\pa_t\Phi)\tci\D\tci\,dx=I_3^1+I_3^2.\la{split}
\ee
First we estimate $I_3^1$. Noting that
\be
\|\tci\|_{L^3}\le CM_\infty e^{|z_i|P_\infty}=\beta_3
\ee
we bound
\be
\bal
|I_3^1|\le& C\|u\|_V\|\na\Phi\|_{L^\infty}\|\tci\|_{L^3}\|\D \tci\|_{L^2}\\
\le&\fr{D_i}{8}\|\D\tci\|_{L^2}^2+C'_g\|u\|_V^2
\eal
\ee
where $C'_g$ depends on $\beta_3$ and $P_\infty$.

In order to bound $I_3^2$, first we note that the Nernst-Planck equations (\ref{np}) can be written
\be
\pa_tc_i+u\cdot\na c_i=D_i\div(e^{-z_i\Phi}\na\tci)
\ee
so that in particular we have
\be
\pa_t\rho=\sum_{i=1}^2z_iD_i\div(e^{-z_i\Phi}\na \tci)-u\cdot\na\rho.\la{ptrho}
\ee
We multiply (\ref{ptrho}) by $\epsilon^{-1}\pa_t\Phi$ and integrate by parts. On the left hand side, we have
\be
\bal
\int_\Omega \pa_t\rho(\epsilon^{-1}\pa_t\Phi)\,dx=-\int_\Omega\pa_t\D\Phi\pa_t\Phi\,dx=&-\int_{\pa\Omega}\pa_t\pa_n\Phi\pa_t\Phi\,dS+\int_\Omega|\na\pa_t\Phi|^2\,dx\\
=&\tau\int_{\pa\Omega}|\pa_t\Phi|^2\,dS+\int_\Omega|\na\pa_t\Phi|^2\,dx
\eal
\ee
where in the second line we used the Robin boundary conditions (\ref{robin}). Therefore
\be
\bal
\tau\|\pa_t\Phi\|_{L^2(\pa\Omega)}^2+\|\pa_t\na\Phi\|_{L^2(\Omega)}^2\le& C\sum_{j=1}^2\left|\int_\Omega\div(e^{-z_j\Phi}\na\tcj)\pa_t\Phi\,dx\right|+C\left|\int_\Omega\div(u\rho)\pa_t\Phi\,dx\right|\\
\le&C\sum_{j=1}^2\int_\Omega e^{|z_j|P_\infty}|\na \tcj||\pa_t\na\Phi|\,dx+C\int_\Omega M_\infty|u||\pa_t\na\Phi|\,dx\\
\le&C_{t}(\sum_{j=1}^2\|\na\tcj\|_{L^2}+\|u\|_V)\|\pa_t\na\Phi\|_{L^2}\la{ptg}
\eal
\ee
where $C_{t}$ depends on $M_\infty,P_\infty$. Therefore, we obtain
\be
(\tau
\|\pa_t\Phi\|_{L^2(\pa\Omega)}^2+\|\pa_t\na\Phi\|_{L^2(\Omega)}^2)^\fr{1}{2}\le C_{t}(\sum_{j=1}^2\|\na\tcj\|_{L^2}+\|u\|_V).\la{blaa}
\ee
Now we use a generalized Poincaré inequality, $\|f\|_{L^2(\Omega)}\le C(\|\na f\|_{L^2(\Omega)}+\|f\|_{L^2(\pa\Omega)})$, which together with Sobolev's inequality $\|f\|_{L^6}\le C\|f\|_{H^1}$ and (\ref{blaa}), gives us  
\be
\|\pa_t\Phi\|_{L^6}\le CC_{t}(\sum_{j=1}^2\|\na\tcj\|_{L^2}+\|u\|_V).\la{pt6}
\ee
Now we bound $I_3^2$ using (\ref{pt6})
\be
\bal
|I_3^2|\le& C\|\pa_t\Phi\|_{L^6}\|\tci\|_{L^3}\|\|\D\tci\|_{L^2}\\
\le&\fr{D_i}{8}\|\D\tci\|_{L^2}^2+C_3(\sum_{j=1}^2\|\na\tcj\|_{L^2}^2+\|u\|_V^2)\la{I32}
\eal
\ee
where $C_3$ depends on $\beta_3$.

Thus, adding the estimates for $I_1,I_2,I_3^1,I_3^2$ and summing in $i$, we obtain from (\ref{dci}),
\be
\fr{1}{2}\fr{d}{dt}\sum_{i=1}^2\|\na \tci\|_{L^2}^2+\fr{D_i}{4}\sum_{i=1}^2\|\D\tci\|_{L^2}^2\le C_F(\|u\|_V^4+1)\fr{1}{2}\sum_{i=1}^2\|\na \tci\|_{L^2}^2+C'_F\|u\|_V^2\la{cru}
\ee
where $C_F,C'_F$ depend on $M_\infty,P_\infty$. 

In the case of NPNS, we obtain from (\ref{cru}),
\be
\bal
\fr{1}{2}\sum_{i=1}^2&\|\na\tci(t)\|_{L^2}^2+\sum_{i=1}^2\fr{D_i}{4}\int_0^t\|\D\tci(s)\|_{L^2}^2\,ds\\
\le&\left(\fr{1}{2}\sum_{i=1}^2\|\na\tci(0)\|_{L^2}^2+C'_F\int_0^t\|u(s)\|_V^2\,ds\right)\exp\left(C_F\int_0^t\|u(s)\|_V^4+1\,ds\right).
\eal
\ee
This gives us (\ref{MU}) after converting to the original variables $c_i$ using the definition of $\tci$ and the uniform bounds on $c_i,\Phi$.

For the NPS system, we recall from Propositions \ref{diss} and \ref{L2} that $\|u\|_V$ is uniformly bounded in time and that $\|u\|_V^2$ and $\|\na \tci\|_{L^2}^2$ decay and are integrable in time, with a bound that is independent of $T$. Therefore, integrating (\ref{cru}) in time, we find that the right hand side is bounded by a constant independent of time, giving us (\ref{M'}). Converting back to the variables $c_i$ gives us (\ref{M}).
\end{proof}

\section{Global Regularity for Blocking Boundary Conditions (Multiple Species)}\la{mss} 
The results of Sections \ref{de} and \ref{he} hold for more than two species, $m>2$. Therefore the question of whether or not we can extend our global regularity result to a multiple species setting boils down to whether or not we can establish the uniform $L^2$ estimates from Section \ref{UL2} in our current setting. Once such a bound is established, the proof of global regularity for multiple species follows exactly as in the proof of Theorem \ref{gr!}.

In the proof of Proposition \ref{L2'} of Section \ref{UL2}, specifically in (\ref{cancellation}), we use the fact that, due to the assumption of two species and $z_1>0>z_2$,
\be
(z_1^2c_1^2-z_2^2c_2^2)\rho=(|z_1|c_1-|z_2|c_2)(|z_1|c_1+|z_2|c_2)\rho=\rho^2(|z_1|c_1+|z_2|c_2)\ge 0.\la{sign}
\ee
Even for the simplest extension of taking 3 species, with $z_1,z_2>0>z_3$, the corresponding leftmost term in (\ref{sign}) is $(z_1^2c_1^2+z_2^2c_2^2-z_3^2c_3^2)\rho$, and in general this term need not be nonnegative. Due to this fact, an analogous proof does not work for more than two species. However, there is one special multiple species setting where we do have global regularity: namely if all the diffusivities are equal $D_1=...=D_m$ and all the valences have the same magnitude (e.g. $z_1=z_2=1=-z_3$) (see also \cite{cil}). Indeed, we have the following theorem,

\begin{thm}
For initial conditions $0\le c_i(0)\in H^1\,(i=1,...,m)$, $u(0)\in V$ and for all $T>0$, if $D_1=...=D_m=D$ for a common value $D$ and $|z_1|=...=|z_m|=z$ for a common value $z$, then NPS (\ref{np}),(\ref{pois}),(\ref{stokes}) has a unique strong solution $(c_i,\Phi,u)$ on the time interval $[0,T]$ satisfying the boundary conditions (\ref{bl}),(\ref{noslip}),(\ref{robin}). For NPNS (\ref{np}),(\ref{pois}),(\ref{nse}), under the same hypotheses, a unique strong solution on $[0,T]$ satisfying the initial and boundary conditions exists provided
\be
\int_0^T\|u(s)\|_V^4\,ds<\infty.
\ee
Moreover, in the case of NPS, the solution satisfies the bounds (\ref{Mi}),(\ref{L2''}),(\ref{Mu}),(\ref{M}),(\ref{M'}) for each $i$. In the case of NPNS, the solution satisfies the bounds (\ref{Mi}),(\ref{L2''}),(\ref{M'u}),(\ref{MU}) for each $i$.\la{grm}
\end{thm}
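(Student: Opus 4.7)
The plan is to reduce the $m$-species problem to the two-species setting of Proposition \ref{L2'} by grouping species according to the sign of the valence. Once we have a uniform-in-time $L^2$ bound on each $c_i$, the bounds \eqref{Mi}, \eqref{L2''}, \eqref{Mu}, \eqref{M'u} follow immediately from Proposition \ref{L2} (which already holds for any $m$), and the higher-order estimates \eqref{M}, \eqref{M'}, \eqref{MU} follow from repeating the argument in the proof of Theorem \ref{gr!} verbatim: the only species-count-dependent step there is the sum over $j$ in \eqref{ptg}, and that sum is harmless regardless of $m$. Hence the whole theorem reduces to producing uniform $L^2$ control of $c_i$ under the hypotheses $D_i \equiv D$ and $|z_i|\equiv z$.

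For that reduction, split the indices into $I^{+}=\{i:z_i=+z\}$ and $I^{-}=\{i:z_i=-z\}$ and set
\be
\sigma=\sum_{i\in I^{+}}c_i,\qquad \eta=\sum_{i\in I^{-}}c_i.
\ee
Because all diffusivities coincide and the valences inside each group coincide, summing the Nernst-Planck equations \eqref{np} within each group gives
\be
\pa_t \sigma+u\cdot\na\sigma=D\,\div(\na\sigma+z\sigma\na\Phi),\qquad \pa_t \eta+u\cdot\na\eta=D\,\div(\na\eta-z\eta\na\Phi),
\ee
and summing the blocking conditions \eqref{bl} yields the corresponding blocking conditions for $\sigma$ and $\eta$. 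Moreover, the Poisson equation \eqref{pois} reads $-\epsilon\D\Phi=z(\sigma-\eta)$, so $(\sigma,\eta,\Phi,u)$ solves precisely a two-species NPNS/NPS system with opposite valences $\pm z$, the same Robin data \eqref{robin}, the same no-slip condition \eqref{noslip}, and nonnegative data. Proposition \ref{L2'} therefore applies and gives
\be
\sup_{t\in[0,T]}\bigl(\|\sigma(t)\|_{L^2}+\|\eta(t)\|_{L^2}\bigr)\le M_2'
\ee
for a constant $M_2'$ independent of $T$. Since $c_i\ge 0$, we have $c_i\le \sigma$ for $i\in I^{+}$ and $c_i\le \eta$ for $i\in I^{-}$ pointwise, so the uniform $L^2$ bound transfers to each $c_i$ individually.

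With the uniform $L^2$ estimate in hand, Proposition \ref{L2} produces the $L^\infty$ bound \eqref{Mi}, the dissipation bound \eqref{L2''}, and the velocity bounds \eqref{Mu} (NPS) or \eqref{M'u} (NPNS conditional on $U(T)<\infty$). The auxiliary variables $\tci=c_i e^{z_i\Phi}$ still satisfy the convection-reaction-diffusion equation \eqref{npt} with homogeneous Neumann condition \eqref{neu}, and the estimates \eqref{dci}--\eqref{I32} carry over unchanged, with the sum in \eqref{ptg} now running over $j=1,\dots,m$. This yields the differential inequality \eqref{cru} with a larger constant $C_F$ depending on $m$, and Grönwall closes the argument exactly as in Theorem \ref{gr!} to give \eqref{M}, \eqref{M'}, and \eqref{MU}. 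Local existence plus these a priori bounds then extend the solution to arbitrary $T>0$, proving Theorem \ref{grm}.

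The only substantive obstacle is the very first step: recognizing that, under equal diffusivities and equal valence magnitudes, the lumped variables $\sigma$ and $\eta$ themselves satisfy a closed two-species Nernst-Planck-Poisson-(Navier-)Stokes system of the sort covered by Proposition \ref{L2'}. Once that observation is made, the bad-sign term singled out after \eqref{sign} reappears only in the equation for $\sigma-\eta$ weighted against itself, where the two-species cancellation of Proposition \ref{L2'} is precisely what restores positivity; everything downstream is a routine adaptation of the arguments already developed in Section \ref{GR}.
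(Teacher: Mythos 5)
Your proof is correct, and it follows the paper's overall strategy: everything except the uniform $L^2$ bound is inherited from Section \ref{GR} (Propositions \ref{diss} and \ref{L2} and the higher-order estimates of Theorem \ref{gr!} are insensitive to the number of species), so the theorem reduces to an effective two-species argument for the $L^2$ bound. Where you differ is in the choice of lumped variables. The paper works with the pair $\rho=\sum_i z_ic_i$ and $\sigma=z\sum_i c_i$ (difference and sum, up to the factor $z$), which satisfy the symmetric system (\ref{rhoeq})--(\ref{sigmaeq}); since $\rho$ changes sign, this is not literally a two-species NPNS system with nonnegative concentrations, so the paper redoes the $L^2$ energy estimate for $(\rho,\sigma)$, with the cross terms $\pm z\int_\Omega\rho\,\na\sigma\cdot\na\Phi\,dx$ cancelling and the cubic term $-\tfrac{z}{\epsilon}\int_\Omega\rho^2\sigma\,dx$ having a good sign. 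You instead group by the sign of the valence, $\sigma=\sum_{i\in I^+}c_i$, $\eta=\sum_{i\in I^-}c_i$, so that $(\sigma,\eta,\Phi,u)$ is verbatim a nonnegative strong solution of the two-species system with valences $\pm z$, and Proposition \ref{L2'} applies as a black box; the pointwise bounds $0\le c_i\le\sigma$ or $\eta$ then transfer the $L^2$ control to each species. The two decompositions are linear-algebraically equivalent ($\rho=z(\sigma-\eta)$, paper's $\sigma=z(\sigma+\eta)$, and the cancellation (\ref{sign}) for your pair is exactly the paper's cross-term/sign structure), but your route buys a genuine economy: no estimate has to be repeated, only cited. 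The only point worth a sentence in a write-up is the degenerate case $I^-=\emptyset$ (or $I^+=\emptyset$), where one lumped concentration vanishes identically; this is still an admissible strong solution of the two-species system, so Proposition \ref{L2'} (and Proposition \ref{diss}, used there for the uniform $\|\Phi\|_{H^1}$ bound) still applies.
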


\begin{proof}
We only prove the result corresponding to Proposition \ref{L2'}. As discussed at the beginning of this section, the other results leading to the proof of global regularity extend naturally from Section \ref{GR}.

This special case of multiple species effectively boils down to a two species setting. The variables $\rho$ and $\sigma=z(c_1+c_2+...+c_m)\ge 0$ satisfy
\begin{align}
\pa_t\rho+u\cdot\na\rho&=D\div(\na\rho+z\sigma\na\Phi)\la{rhoeq}\\
\pa_t\sigma+u\cdot\na\sigma&=D\div(\na\sigma+z\rho\na\Phi)\la{sigmaeq}
\end{align}
and boundary conditions
\be
\bal
(\pa_n\rho+z\sigma\pa_n\Phi)_{|\pa\Omega}=(\pa_n\sigma+z\rho\pa_n\Phi)_{|\pa\Omega}=0.
\eal
\ee                                                                                                                       Multiplying (\ref{rhoeq}) and (\ref{sigmaeq}) by $\rho$ and $\sigma$ respectively and integrating by parts, we obtain
\begin{align}
    \fr{1}{2D}\fr{d}{dt}\|\rho\|_{L^2}^2+\|\na\rho\|_{L^2}^2&=-z\int_\Omega\sigma\na\rho\cdot\na\Phi\,dx\la{bla}\\
    \fr{1}{2D}\fr{d}{dt}\|\sigma\|_{L^2}^2+\|\na\sigma\|_{L^2}^2&=-z\int_\Omega\rho\na\sigma\cdot\na\Phi\,dx\la{blabla}.
\end{align}
We estimate the right hand side of (\ref{bla}) using the boundary conditions,
\be
\bal
-z\int_\Omega\sigma\na\rho\cdot\na\Phi\,dx=&-z\int_{\pa\Omega}\sigma\rho\pa_n\Phi\,dS+z\int_\Omega\rho\na\sigma\cdot\na\Phi\,dx-\fr{z}{\epsilon}\int_\Omega\rho^2\sigma\,dx\\
\le&z\tau\int_{\pa\Omega}\sigma\rho\Phi\,dS-z\int_{\pa\Omega}\sigma\rho\xi\,dS+z\int_\Omega\rho\na\sigma\cdot\na\Phi\,dx\\
=&I_1+I_2+z\int_\Omega\rho\na\sigma\cdot\na\Phi\,dx.\la{bb}
\eal
\ee
As in (\ref{I_1}) and (\ref{I_2}), we obtain using Lemma \ref{trace} in the Appendix,
\be
\bal
    |I_1|\le& C\|\Phi\|_{L^4(\pa\Omega)}\|\rho\|_{L^\fr{8}{3}(\pa\Omega)}\|\sigma\|_{L^\fr{8}{3}(\pa\Omega)}\\
        \le&\|\Phi\|_{H^1(\Omega)}\left(C_\delta(\|\rho\|_{L^1}^2+\|\sigma\|_{L^1}^2)+\delta(\|\na\rho\|_{L^2}^2+\|\na\sigma\|_{L^2}^2)\right)
\eal
\ee
and similarly
\be
|I_2|\le C_\delta(\|\rho\|_{L^1}^2+\|\sigma\|_{L^1}^2)+\delta(\|\na\rho\|_{L^2}^2+\|\na\sigma\|_{L^2}^2).\la{I22}
\ee
Thus choosing
\be
\delta=\min\left\{\fr{1}{4},\fr{1}{4\sup_t\|\Phi(t)\|_{H^1(\Omega)}}\right\}
\ee
we obtain by adding (\ref{bla}) and (\ref{blabla}) and using the estimates (\ref{bb})-(\ref{I22}),
\be
\fr{1}{2D}\fr{d}{dt}\left(\|\rho\|_{L^2}^2+\|\sigma\|_{L^2}^2\right)+\fr{1}{2}\left(\|\na\rho\|_{L^2}^2+\|\na\sigma\|_{L^2}^2\right)<R\la{RRRR}
\ee
where $R$ is a constant that depends on uniform bounds on $\|\Phi\|_{H^1},\|\rho\|_{L^1}$ and $\|\sigma\|_{L^1}$, along with the parameters of the system. Thus by applying Gagliardo-Nirenberg inequalities to $\|\na\rho\|_{L^2}$ and $\|\na\sigma\|_{L^2}$ as in (\ref{poinc}), we conclude after a Grönwall estimate on (\ref{RRRR}) that $\|\rho\|_{L^2}$ and $\|\sigma\|_{L^2}$ remain uniformly bounded in time. In particular, because $c_i$ are nonnegative, it follows from the boundedness of $\|\sigma\|_{L^2}$ that $\|c_i\|_{L^2}$ are uniformly bounded in time for each $i$. This result replaces Proposition \ref{L2'}.
\end{proof}

\section{Global Regularity for Mixed Boundary Conditions}\la{mbc}
In this section, we again consider NPNS and NPS for two oppositely charged species. Suppose $\pa\Omega$ represents a cation selective membrane which allows for permeation of cations but blocks anions. As previously mentioned, ion selectivity is typically modelled by Dirichlet boundary conditions \cite{davidson,rubishtil}. So in this case, the boundary conditions for $c_i$, assuming $z_1>0>z_2$, are
\begin{align}
{c_1(x,t)}_{|\pa\Omega}=&\gamma_1>0\la{di}\\
(\pa_n c_2(x,t)+z_2c_2(x,t)\pa_n\Phi(x,t))_{|\pa\Omega}=&0\la{2bl}
\end{align}
where $\gamma_1$ is \textit{constant}. The boundary conditions for $\Phi$ and $u$ are unchanged (see (\ref{noslip}),(\ref{robin})).

As discussed in the introduction, such configurations are known, in general, to lead to electrokinetic instabilities whereby, for large enough voltage drops $\sup\xi -\inf\xi$, one starts seeing vortical flow patterns, and for even larger drops, even chaotic behavior, resembling fluid turbulence and reminiscent of thermal turbulence in Rayleigh-Bénard convection \cite{davidson}. Mathematically, the instability of the configuration considered in this section is manifested by the lack of a dissipative bound resembling that from Proposition \ref{diss}.

To prove global regularity in this mixed setting, we aim, as in the previous sections, to obtain a priori control of the growth of the strong norms, which characterize strong solutions, namely $c_i\in L^\infty_t H^1_x\cap L^2_tH^2_x,\, u\in L^\infty_t V\cap L^2_tH^2_x.$ Let us remark here that \textit{strong} regularity in fact implies $C^\infty((0,T];C^\infty(\bar\Omega))$ regularity so long as the boundary and boundary conditions are smooth, as is the case here. This equivalence follows from a bootstrapping scheme using standard parabolic theory (see, for example, \cite{evans}). This observation justifies the use of Sard's theorem \cite{jl} in the proof of the following proposition.

\begin{prop}
Let $(c_i,\Phi,u)$ be a strong solution of NPNS (\ref{np}),(\ref{pois}),(\ref{nse}) or NPS (\ref{np}),(\ref{pois}),(\ref{stokes}) for two oppositely charged species ($m=2, z_1>0>z_2$) on the time interval $[0,T]$ satisfying boundary conditions (\ref{di}),(\ref{2bl}),(\ref{noslip}),(\ref{robin}) and initial conditions $0\le c_i(0)\in H^1,\,u(0)\in V$. Then there exist ${\tilde M_2},\tilde m_2>0$ depending on the parameters of the system and the initial conditions such that for each $i=1,2$
\be
\sup_{t\in[0,T]}\|c_i(t)\|_{L^2}<\tilde M_2 e^{\tilde m_2T^3}=M_2(T).\la{M2t}
\ee\la{gL2}
\end{prop}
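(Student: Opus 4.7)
The plan is to mirror the energy method of Proposition \ref{L2'}, adapted to mixed Dirichlet/blocking boundary conditions, and to close the resulting Gronwall estimate via the conserved smallness of $\|c_2(0)\|_{L^1}$ in lieu of the dissipation estimate of Proposition \ref{diss}, which is no longer available because $c_1$ violates blocking BCs.

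First I would test the $c_1$ equation against $\fr{|z_1|}{D_1}(c_1-\gamma_1)$ rather than $\fr{|z_1|}{D_1}c_1$: since $\gamma_1$ is constant and $c_1-\gamma_1$ vanishes on $\pa\Omega$, this choice kills the otherwise uncontrolled boundary integral $\int_{\pa\Omega}(c_1-\gamma_1)\pa_n c_1\,dS$, while $\pa_t c_1=\pa_t(c_1-\gamma_1)$ and the transport integral still vanishes by $\div u=0$ and no-slip. For the $c_2$ equation I would test against $\fr{|z_2|}{D_2}c_2$ as in Proposition \ref{L2'}, since its boundary condition is unchanged. Setting $V=\fr{|z_1|}{2D_1}\|c_1-\gamma_1\|_{L^2}^2+\fr{|z_2|}{2D_2}\|c_2\|_{L^2}^2$ and integrating by parts with the aid of the Poisson equation and the Robin condition (\ref{robin}), one obtains an energy identity of the form
\begin{equation*}
\dot V+|z_1|\|\na c_1\|_{L^2}^2+|z_2|\|\na c_2\|_{L^2}^2+\fr{1}{2\epsilon}\int_\Omega\rho^2(|z_1|c_1+|z_2|c_2)\,dx=J_0+I_1^{(2)}+I_2^{(2)}.
\end{equation*}
The key algebraic cancellation $z_1^2c_1^2-z_2^2c_2^2=\rho(|z_1|c_1+|z_2|c_2)$ from Proposition \ref{L2'} still supplies the favorable sink on the left. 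The new boundary term $J_0=-\fr{z_1^2\gamma_1^2}{2}\int_{\pa\Omega}\pa_n\Phi\,dS$ arises from the Dirichlet BC for $c_1$ and, using the Robin condition, is bounded by $C(1+\|\Phi\|_{H^1})$. The terms $I_1^{(2)},I_2^{(2)}$ are estimated exactly as in (\ref{I_1})--(\ref{I_2}), yielding $(1+\|\Phi\|_{H^1})(C_\delta\|c_2\|_{L^1}^2+\delta\|\na c_2\|_{L^2}^2)$; crucially, $\|c_2(t)\|_{L^1}=\|c_2(0)\|_{L^1}\le i_c$ is still conserved in the mixed setting because $c_2$ retains its blocking BC.

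Since the dissipation estimate of Proposition \ref{diss} is unavailable, I close the bound on $\|\Phi\|_{H^1}$ through $V$ itself: elliptic regularity for $-\epsilon\D\Phi=\rho$ with Robin BC yields $\|\Phi\|_{H^1}\le C(1+\|\rho\|_{L^2})\le C(1+\|c_1-\gamma_1\|_{L^2}+\|c_2\|_{L^2})\le C(1+V^{1/2})$. I then choose $\delta=\delta(t)$ so that $\delta(1+\|\Phi(t)\|_{H^1})\le|z_2|/4$ to absorb the $\delta\|\na c_2\|^2$ contribution into the good dissipation. Using Poincar\'e for $c_1-\gamma_1$ (zero Dirichlet trace) and the Gagliardo-Nirenberg--Poincar\'e bound $\|c_2\|_{L^2}^2\le C(\|\na c_2\|^2+\|c_2\|_{L^1}^2)$, the remaining dissipation $|z_1|\|\na c_1\|^2+\fr{|z_2|}{2}\|\na c_2\|^2$ dominates $c_* V-Ci_c^2$. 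Combining yields a differential inequality of the form
\begin{equation*}
\dot V+c_* V\le A+B\,i_c^2(1+V)+C V^{1/2},
\end{equation*}
with $A,B,C$ depending only on the parameters. A Gronwall argument then produces $V(t)\le\tilde M_2^2 e^{2\tilde m_2 t}$, which via $\|c_1\|_{L^2}\le\|c_1-\gamma_1\|_{L^2}+\gamma_1|\Omega|^{1/2}$ translates into (\ref{M2t}).

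The main obstacle is the self-referential bound $\|\Phi\|_{H^1}\le C(1+V^{1/2})$: because the boundary terms driving $\dot V$ involve positive powers of $\|\Phi\|_{H^1}$, they risk producing contributions super-linear in $V$ that a naive Gronwall cannot tolerate. The smallness assumption $\|c_2(0)\|_{L^1}\le i_c$ is precisely what renders the coefficient of the potentially super-linear $i_c^2(1+V)^\kappa$ contribution small enough either to be absorbed into $c_* V$, giving a linear inequality and hence at most exponential-in-$T$ growth, or to stay subcritical. Tracking the precise interpolation exponent in the trace estimate for $\|c_2\|_{L^{8/3}(\pa\Omega)}^2$ is therefore the delicate computation, and the choice to test against $c_1-\gamma_1$ rather than $c_1$ is essential in order to avoid the otherwise uncontrolled $\gamma_1\int_{\pa\Omega}\pa_n c_1\,dS$ boundary term, which has no a priori estimate in our setting.
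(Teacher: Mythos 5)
Your energy identity (testing the $c_1$ equation against $\fr{|z_1|}{D_1}(c_1-\gamma_1)$ and the $c_2$ equation against $\fr{|z_2|}{D_2}c_2$, keeping the sink $\fr{1}{2\epsilon}\int\rho^2\sigma$ and the harmless boundary term $J_0$) is essentially the paper's Step 2, i.e.\ (\ref{AA})--(\ref{W}). The genuine gap is in how you close the Gr\"onwall estimate. Since Proposition \ref{diss} is unavailable, you replace the uniform bound on $\|\Phi\|_{H^1}$ by $\|\Phi\|_{H^1}\le C(1+V^{1/2})$ and then choose $\delta=\delta(t)\sim\|\Phi\|_{H^1}^{-1}$ in the trace estimate so as to absorb $\delta\|\Phi\|_{H^1}\|\na c_2\|_{L^2}^2$. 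But the constant $C_\delta$ in Lemma \ref{trace} (\ref{T3}) blows up polynomially as $\delta\to0$ (tracking the interpolation of $L^{8/3}(\pa\Omega)$ through $L^1$--$H^1$ gives $C_\delta\sim\delta^{-\beta}$ with $\beta$ well above $1$), so the compensating term is $\|\Phi\|_{H^1}C_\delta\|c_2\|_{L^1}^2\sim i_c^2(1+V)^{\kappa}$ with $\kappa>1$, not $i_c^2(1+V)$ as in your displayed inequality. A small coefficient in front of a super-linear term cannot be absorbed into $c_*V$ uniformly in $V$: the ODE $\dot V\le -c_*V+A+Bi_c^2(1+V)^\kappa$ admits finite-time blow-up for large $V(0)$ no matter how small $i_c$ is, and the proposition must hold for arbitrary initial data with $i_c$ depending only on the parameters. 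So the argument as written does not close, and your closing paragraph acknowledges the super-linear term without actually resolving it.

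What is missing is the paper's Step 1, which is where the smallness of $\|c_2(0)\|_{L^1}$ is really used. There one tests the equations with $\fr{1}{D_1}(\log c_1-\log\gamma_1)$ and $\fr{1}{D_2}\log c_2$ (see (\ref{logc1})--(\ref{logc2})); because the resulting boundary term is only quadratic ($c_2\Phi$ rather than $c_2^2\Phi$), it can be estimated by $C_\delta(1+\|c_2\|_{L^1}^2)+\delta\|\rho\|_{L^2}^2+C_\delta\|c_2\|_{L^1}\|\na\sqrt{c_2}\|_{L^2}^2$, and the small conserved quantity $\|c_2\|_{L^1}\le i_c$ multiplies the \emph{dissipation} $\|\na\sqrt{c_2}\|_{L^2}^2$, so it is absorbed regardless of the size of the solution; the cross terms $z_ic_i\rho$ sum to $\rho^2/\epsilon$, yielding the time-dependent but finite bound $\int_0^T\|\rho(s)\|_{L^2}^2\,ds\le R(T)$ with $R(T)$ linear in $T$ (see (\ref{RT})). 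With this in hand, the $L^2$ estimate is closed not by forcing the Gr\"onwall coefficient to be bounded, but by letting it be $C(\|\rho\|_{L^2}^2+1)$, which is merely time-integrable; this is exactly what produces the exponential-in-$T$ constant $\tilde M_2e^{\tilde m_2T}$ in (\ref{M2t}). Without this preliminary entropy estimate, the single-step energy argument you propose yields a super-linear differential inequality and does not prove the claimed bound for large data.
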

\begin{proof}
\textbf{Step 1. $L^\infty_t L^1_x$ bounds on $c_i$.} Integrating (\ref{np}) for $i=2$ over $\Omega$, it follows from (\ref{2bl}) that $\|c_2(t)\|_{L^1}=\|c_2(0)\|_{L^1}$ for all $t\ge 0$. To obtain control of $\|c_1(t)\|_{L^1}$, we proceed as follows. We fix a $C^1$, nondecreasing function $\chi:\mathbb{R}\to[0,\infty)$ such that $\chi(s)=0$ for $s\in (-\infty,0]$, $\chi(s)=s^2$ for $s\in (0,\fr{1}{2})$, $\chi(s)=1$ for $s\in[1,\infty)$, and $\chi'\le 2$ everywhere. We note that $(\chi(s))^\alpha$ converges pointwise to the indicator function ${\bf{1}}_{s>0}(s)$ as $\alpha\to 0^+.$

Now we fix a time $t>0$, and we note that since the mapping $x\in\bar\Omega\mapsto  {\bar c}_1(t,x):=c_1(t,x)-\gamma_1$ is smooth, it follows from Sard's theorem that there exists a sequence $\beta_n\to 0$ such that each $\beta_n$ is a regular value of ${\bar c}_1(t):\bar\Omega\mapsto\mathbb{R}$. In particular, it follows that the sets $\{{\bar c}_1(t)>\beta_n\}$ are (possibly empty) smooth submanifolds of $\Omega$ such that if $\{{\bar c}_1(t)>\beta_n\}$ is nonempty, then it has smooth boundary $\{{\bar c}_1(t)=\beta_n\}\subset\Omega$. 

Now we multiply (\ref{np}) for $i=1$ by the test function
\be
\psi=\psi_{\alpha,n}=(\chi\circ( {\bar c}_1(t)-\beta_n))^\alpha,\quad 0<\alpha<1
\ee
and integrate; then, defining the following primitive,
\be
Q(y)=Q_{\alpha}(y)=\int_0^y (\chi(s))^\alpha\,ds\la{Qa}
\ee 
we have on the left hand side, using $\div u=0$,
\be
\int_\Omega (\pa_t c_1)\psi\,dx+\int_\Omega u\cdot\na ( Q\circ({\bar c}_1(t)-\beta_n))\,dx=\int_\Omega (\pa_t c_1)\psi\,dx.
\ee
On the right hand side, we obtain, using the fact that $\psi=0$ on $\pa\{{\bar c}_1(t)>\beta_n\}=\{ {\bar c}_1(t)=\beta_n\}$
\be
\bal
D_1\int_{ {\bar c}_1(t)>\beta_n}\div(\na c_1+c_1\na\Phi)\psi\,dx=&-\alpha D_1\int_{ {\bar c}_1(t)>\beta_n}|\na c_1|^2\fr{\chi'\circ({\bar c}_1(t)-\beta_n)}{(\chi\circ({\bar c}_1(t)-\beta_n))^{1-\alpha}}\,dx\\
&+D_1\int_{{\bar c}_1(t)>\beta_n}\div (({\bar c}_1(t)-\beta_n)\na\Phi)\psi\,dx\\
&-\fr{D_1}{\epsilon}(\gamma_1+\beta_n)\int_{{\bar c}_1(t)>\beta_n}\rho\psi\,dx\\
\le&-\alpha D_1\int_{{\bar c}_1(t)>\beta_n} ({\bar c}_1(t)-\beta_n)\na\Phi\cdot\na c_1\fr{\chi'\circ({\bar c}_1(t)-\beta_n)}{(\chi\circ({\bar c}_1(t)-\beta_n))^{1-\alpha}}\,dx\\
&-\fr{D_1}{\epsilon}(\gamma_1+\beta_n)\int_{{\bar c}_1(t)>\beta_n}\rho\psi\,dx.
\eal
\ee
Thus far, we have
\be
\bal
\int_\Omega (\pa_t c_1)\psi\,dx\le& -\alpha D_1\int_{{\bar c}_1(t)>\beta_n} ({\bar c}_1(t)-\beta_n)\na\Phi\cdot\na c_1\fr{\chi'\circ({\bar c}_1(t)-\beta_n)}{(\chi\circ({\bar c}_1(t)-\beta_n))^{1-\alpha}}\,dx\\
&-\fr{D_1}{\epsilon}(\gamma_1+\beta_n)\int_{{\bar c}_1(t)>\beta_n}\rho\psi\,dx\la{g1}
\eal
\ee
Recalling the dependence of $\psi$ on $n$ and taking the limit of (\ref{g1}) as $n\to \infty$, we obtain the following estimate, which holds for each $t>0$
\be\bal
\int_\Omega (\pa_t c_1)(\chi\circ {\bar c}_1(t))^\alpha\,dx\le&-\alpha D_1\int_{{\bar c}_1(t)>0}{\bar c}_1(t)\na\Phi\cdot\na c_1\fr{\chi'\circ {\bar c}_1(t)}{(\chi\circ {\bar c}_1(t))^{1-\alpha}}\,dx\\
&-\fr{D_1}{\epsilon}\gamma_1\int_{{\bar c}_1(t)>0}\rho(\chi\circ {\bar c}_1(t))^\alpha\,dx.\la{g2}
\eal\ee
We note that the sequence $\beta_n$ depended on the fixed time $t$; however upon taking the limit $\beta_n$, the resulting bound (\ref{g2}) holds for all positive time. The limit on the right hand side is justified by dominated convergence along with the fact that, by our choice of $\chi$, we have
\be
\bal
{\bf{1}}_{\{{\bar c}_1(t)-\beta_n>0\}}|{\bar c}_1(t)-\beta_n|\fr{\chi'\circ ({\bar c}_1(t)-\beta_n)}{(\chi\circ ({\bar c}_1(t)-\beta_n))^{1-\alpha}}\le&{\bf{1}}_{\{0<{\bar c}_1(t)-\beta_n<\fr{1}{2}\}}|{\bar c}_1(t)-\beta_n|\fr{2|{\bar c}_1(t)-\beta_n|}{|{\bar c}_1(t)-\beta_n|^{2-2\alpha}}\\
&+{\bf{1}}_{\{\fr{1}{2}\le {\bar c}_1(t)-\beta_n\}}\fr{2\|{\bar c}_1(t)-\beta_n\|_{L^\infty}}{(1/2)^{2-2\alpha}}\\
\le&8\max\{\|{\bar c}_1(t)-\beta_n\|_{L^\infty}^{2\alpha},\|{\bar c}_1(t)-\beta_n\|_{L^\infty}\}.\la{g3}
\eal
\ee
The above bound ensures that the integrand of the first integral on the right hand side of (\ref{g1}) is bounded uniformly in $n$.

We now bound the right hand side of (\ref{g2}). The first integral is bounded as follows
\be\bal
\left|\alpha D_1\int_{{\bar c}_1(t)>0}{\bar c}_1(t)\na\Phi\cdot\na c_1\fr{\chi'\circ {\bar c}_1(t)}{(\chi\circ {\bar c}_1(t))^{1-\alpha}}\,dx\right|\le& 2\alpha D_1\left|\int_{0<{\bar c}_1(t)<\fr{1}{2}}{\bar c}_1(t)|\na\Phi||\na c_1|\fr{{\bar c}_1(t)}{({\bar c}_1(t))^{2-2\alpha}}\,dx\right|\\
&+\alpha D_1\left|\int_{\fr{1}{2}\le {\bar c}_1(t)}{\bar c}_1(t)|\na\Phi||\na c_1|\fr{2}{(1/2)^2}\,dx \right|\\
\le& 2\alpha D_1\|\na\Phi\|_{L^\infty}\|\na c_1\|_{L^\infty}(1/2)^{2\alpha}|\Omega|\\
&+8\alpha D_1\|\na\Phi\|_{L^\infty}\|\na c_1\|_{L^\infty}\|{\bar c}_1(t)\|_{L^1}\\
\le&\alpha D_1\|\na\Phi\|_{L^\infty}\|\na c_1\|_{L^\infty}(2|\Omega|+8\|{\bar c}_1(t)\|_{L^1}).\la{bbb}
\eal\ee
Using the nonnegativity of $c_1$, the second integral is bounded by
\be
-\fr{D_1}{\epsilon}\gamma_1\int_{{\bar c}_1(t)>0}\rho(\chi\circ {\bar c}_1(t))^\alpha\,dx\le \fr{D_1}{\epsilon}\gamma_1\|c_2(t)\|_{L^1}=\fr{D_1}{\epsilon}\gamma_1\|c_2(0)\|_{L^1}\la{bbb'}
\ee

Now returning to (\ref{g2}) and writing it as
\be
\fr{d}{dt}\int_\Omega Q\circ {\bar c}_1(t)\,dx\le-\alpha D_1\int_{{\bar c}_1(t)>0}{\bar c}_1(t)\na\Phi\cdot\na c_1\fr{\chi'\circ {\bar c}_1(t)}{(\chi\circ {\bar c}_1(t))^{1-\alpha}}\,dx-\fr{D_1}{\epsilon}\gamma_1\int_{{\bar c}_1(t)>0}\rho(\chi\circ {\bar c}_1(t))^\alpha\,dx
\ee
we find, using (\ref{bbb}), (\ref{bbb'}) and integrating in time,
\be
\bal
\int_\Omega Q\circ {\bar c}_1(t)\,dx\le& \int_\Omega Q\circ {\bar c}_1(0)\,dx+\alpha D_1\int_0^t \|\na\Phi(s)\|_{L^\infty}\|\na c_1(s)\|_{L^\infty}(2|\Omega|+8\|{\bar c}_1(s)\|_{L^1})\,ds\\
&+\fr{D_1}{\epsilon}\gamma_1\|c_2(0)\|_{L^1}t.\la{g4}
\eal
\ee
Now we recall the dependence of $Q$ on $\alpha$ and observe that $Q=Q_\alpha(y)\to y_+:=\max\{y,0\}$ as $\alpha\to 0$. Using these facts, we take the limit of (\ref{g4}) as $\alpha\to 0$ to obtain
\be\bal
\int_\Omega {\bar c}_1(t)_+\,dx\le&\int_\Omega {\bar c}_1(0)_+\,dx+\fr{D_1}{\epsilon}\gamma_1\|c_2(0)\|_{L^1}t=\tilde B(t).\la{g5}
\eal\ee
Thus, since,
\be
\|{c}_1(t)\|_{L^1}\le \int_\Omega {\bar c}_1(t)_++\gamma_1\,dx\le \tilde B(t)+\gamma_1|\Omega|=B(t)\la{B(t)}
\ee
we have shown that $\|c_1(t)\|_{L^1}$ grows at most linearly in time.

\textbf{Step 2. Time integrability of $\|\rho\|_{L^2}^2$.} Unlike in Section \ref{UL2}, it is necessary to treat $c_1,c_2$ separately as they satisfy different boundary conditions. We first consider $c_1$. Multiplying (\ref{np}) for $i=1$ by $\fr{1}{D_1}(\log c_1-\log\gamma_1)$ and integrating by parts, we obtain 
\be
\bal
\fr{1}{D_1}\fr{d}{dt}\left(\int_\Omega c_1\log c_1-c_1-c_1\log\gamma_1\,dx\right)=&-\int_\Omega \fr{|\na c_1|^2}{c_1}+z_1\na\Phi\cdot\na c_1\,dx\\
=&-4\int_\Omega |\na \sqrt{c_1}|^2\,dx-z_1\int_\Omega \na\Phi\cdot\na(c_1-\gamma_1)\,dx\\
=&-4\int_\Omega |\na \sqrt{c_1}|^2\,dx+\fr{z_1}{\epsilon}\gamma_1\int_{\Omega}\rho\,dx-\fr{z_1}{\epsilon}\int_\Omega c_1\rho\,dx\la{logc1}
\eal
\ee
We observe that no boundary terms occur because $c_1=\gamma_1$ on the boundary, and the advective term involving $u$ also vanishes because $\div u=0$ and because $\log\gamma_1$ is a constant. 

Similarly, multiplying (\ref{np}) for $i=2$ by $\fr{1}{D_2}\log c_2$ and integrating by parts, we obtain
\be
\bal
\fr{1}{D_2}\fr{d}{dt}\left(\int_\Omega c_2\log c_2-c_2\,dx\right)=&-\int_\Omega \fr{|\na c_2|^2}{c_2}+z_2\na\Phi\cdot\na c_2\,dx\\
=&-4\int_\Omega |\na \sqrt{c_2}|^2\,dx-z_2\int_{\pa\Omega}c_2\pa_n\Phi\,dS-\fr{z_2}{\epsilon}\int_\Omega c_2\rho\,dx.\la{logc2}
\eal
\ee
We bound the boundary term using the Robin boundary conditions (\ref{robin}) and Lemma \ref{trace} in the Appendix (specifically (\ref{T1}) with $p=2$), 
\be
\bal
\left|z_2\int_{\pa\Omega} c_2\pa_n\Phi\,dS\right|\le & \tau|z_2|\left|\int_{\pa\Omega}c_2\Phi\,dS\right|+\|\xi\|_{L^\infty{(\pa\Omega})}|z_2|\int_{\pa\Omega}c_2\,dS\\
\le&C(\|\Phi\|_{L^\infty(\Omega)}+\|\xi\|_{L^\infty(\pa\Omega)})\|\sqrt{c_2}\|_{L^2(\pa\Omega)}^2\\
\le&C(\|\rho\|_{L^\fr{7}{4}}+1)(\|\sqrt{c_2}\|_{L^2}^\fr{1}{2}\|\na \sqrt{c_2}\|_{L^2}^\fr{1}{2}+\|\sqrt{c_2}\|_{L^2})^2\\
\le&C(\|\rho\|_{L^1}^\fr{1}{7}\|\rho\|_{L^2}^\fr{6}{7}+1)(\|c_2\|_{L^1}^\fr{1}{2}\|\na\sqrt{c_2}\|_{L^2}+\|c_2\|_{L^1})\\
\le&2\|\na \sqrt{c_2}\|_{L^2}^2+\fr{1}{2\epsilon}\|\rho\|_{L^2}^2+C(\|\rho\|_{L^1}^2\|c_2\|_{L^1}^7+\|c_2\|_{L^1}+\|\rho\|_{L^1}^\fr{1}{4}\|c_2\|_{L^1}^\fr{7}{4}).\la{c2bdb}
\eal
\ee
The third line follows from the embedding $W^{2,\alpha}(\Omega)\hookrightarrow L^\infty(\Omega)$ for $\alpha>\fr{3}{2}$ (here, for concreteness we choose $\alpha=\fr{7}{4}$) and the fact that $\Phi$ is related to $\rho$ via the Poisson equation (\ref{pois}) and the boundary conditions (\ref{robin}). The fourth line follows from interpolating $L^\fr{7}{4}$ between $L^1$ and $L^2$. The last line follows from Young's inequalities.  

\begin{comment}
More precisely, we use the fact that the mapping
\be
\Phi\in H^2(\Omega)\mapsto (-\epsilon\Delta\Phi,(\pa_n\Phi+\tau\Phi)_{|\pa\Omega})\in L^2(\Omega)\times H^\fr{1}{2}(\pa\Omega)
\ee 
is an isomorphism (we refer the reader to \cite{grisvard} for the relevant elliptic theory). Thus, there exists $C>0$ depending on $\epsilon$ and $\tau$ such that
\be
\|\Phi\|_{H^2(\Omega)}\le C(\|\rho\|_{L^2(\Omega)}+\|\xi\|_{H^\fr{1}{2}(\pa\Omega)}).
\ee
Then, from the Hölder embedding $H^2(\Omega)\hookrightarrow C^\alpha(\bar\Omega)\hookrightarrow L^\infty(\Omega)$ ($\alpha>0$), we have that
\be
\|\Phi\|_{L^\infty(\Omega)}\le C(\|\rho\|_{L^2(\Omega)}+\|\xi\|_{H^\fr{1}{2}(\pa\Omega)}),\la{hol}
\ee
which justifies the third line of (\ref{c2bdb}). Also, this fact that $\Phi$ is continuous up to the boundary justifies the bound $\|\Phi\|_{L^\infty(\pa\Omega)}\le\|\Phi\|_{L^\infty(\Omega)}$, implicitly used in the second line of (\ref{c2bdb}). The constant $\delta>0$ is chosen below.
\end{comment}

Defining 
\be
\mathcal{E}=\fr{1}{D_1}\int_\Omega c_1\log c_1-c_1-c_1\log\gamma_1\,dx+\fr{1}{D_2}\int_\Omega c_2\log c_2-c_2\,dx
\ee
we obtain by adding (\ref{logc1}) to (\ref{logc2}), using (\ref{c2bdb}), and recalling $\rho=z_1c_1+z_2c_2$,
\be
\bal
\fr{d}{dt}\mathcal{E}+2\int_\Omega |\na\sqrt{c_1}|^2+|\na\sqrt{c_2}|^2\,dx+\fr{1}{2\epsilon}\int_\Omega \rho^2\,dx\le G(t)\la{lgt}
\eal
\ee
where 
\be
G(t)=C(\|\rho\|_{L^1}^2\|c_2\|_{L^1}^7+\|c_1\|_{L^1}+\|c_2\|_{L^1}+\|\rho\|_{L^1}^\fr{1}{4}\|c_2\|_{L^1}^\fr{7}{4})
\ee
Because $\|c_2(t)\|_{L^1}$ is constant and $\|c_1(t)\|_{L^1}$ grows at most linearly in time (\ref{B(t)}, we have that $G(t)$ grows at most quadratically in time and in particular is locally integrable. Thus integrating (\ref{lgt}), we obtain
\be
2\epsilon\mathcal{E}(T)+\int_0^T\|\rho(s)\|_{L^2}^2\,ds\le 2\epsilon\left(\mathcal{E}(0)+\int_0^TG(t)\,dt\right)=r(T).
\ee
Lastly, we observe that because $x\log x$ is superlinear, for any $c>0$ there exists $C>0$ depending only on $c$ such that $-C\le x\log x-cx$ for all $x>0$. It follows that $\mathcal{E}(T)$ is bounded below $\mathcal{E}(T)>-C_0$, with $C_0$ independent of $T$. Therefore
\be
\int_0^T\|\rho(s)\|_{L^2}^2\,ds\le r(T)+2\epsilon C_0=R(T).\la{RT}
\ee
We note that $R(T)$ increases at most like $T^3$.

\textbf{Step 3. $L^\infty_tL^2_x$ bounds for $c_i$.} Equipped with this time dependent bound, we proceed to control $c_i$ in $L^2$. Multiplying (\ref{np}) for $i=2$ by $\fr{|z_2|}{D_2}c_2$ and integrating by parts, we obtain exactly as in (\ref{ener}),
\be
\bal
\fr{|z_2|}{2D_2}\fr{d}{dt}\int_\Omega c_2^2\,dx=&-|z_2|\int_\Omega|\na c_2|^2\,dx-\fr{z_2|z_2|}{2\epsilon}\int_\Omega c_2^2\rho\,dx\\
&+z_2|z_2|\fr{\tau}{2}\int_{\pa\Omega}c_2^2\Phi\,dS- \fr{z_2|z_2|}{2}\int_{\pa\Omega}c_2^2\xi\,dS\\
=&-|z_2|\int_\Omega|\na c_2|^2\,dx-\fr{z_2|z_2|}{2\epsilon}\int_\Omega c_2^2\rho\,dx+I_1+I_2.\la{AA}
\eal
\ee
We bound the boundary integrals using the embedding $H^2\hookrightarrow L^\infty$ and Lemma \ref{trace} in the Appendix,
\be
\bal
|I_1|\le& C\|\Phi\|_{L^\infty(\Omega)}\|c_2\|_{L^2(\pa\Omega)}^2\\
\le&C(\|\rho\|_{L^2}+1)(\|c_2\|_{L^2}^\fr{1}{2}\|\na c_2\|_{L^2}^\fr{1}{2}+\|c_2\|_{L^2})^2\\
\le&\fr{|z_2|}{4}\|\na c_2\|_{L^2}^2+C(\|\rho\|_{L^2}^2+1)\|c_2\|_{L^2}^2.\la{BB}
\eal
\ee 
The last line follows from Young's inequalities. Similarly,
\be
\bal
|I_2|\le& C\|\xi\|_{L^\infty(\pa\Omega)}\|c_2\|_{L^2(\pa\Omega)}^2\\
\le&C\|\xi\|_{L^\infty(\pa\Omega)}(\|c_2\|_{L^2}^\fr{1}{2}\|\na c_2\|_{L^2}^\fr{1}{2}+\|c_2\|_{L^2})^2\\
\le&\fr{|z_2|}{4}\|\na c_2\|_{L^2}^2+C\|c_2\|_{L^2}^2.\la{CC}
\eal
\ee
From (\ref{AA}), using (\ref{BB}) and (\ref{CC}), we obtain
\be
\fr{|z_2|}{2D_2}\fr{d}{dt}\|c_2\|_{L^2}^2+\fr{|z_2|}{2}\|\na c_2\|_{L^2}^2\le C(\|\rho\|_{L^2}^2+1)\|c_2\|_{L^2}^2-\fr{z_2|z_2|}{2\epsilon}\int_\Omega c_2^2\rho\,dx.\la{DD}
\ee
Next we obtain the corresponding estimate for $c_1$. We observe that $q_1=c_1-\gamma_1$ satisfies
\be
\bal
\pa_t q_1+u\cdot\na q_1=&D_1\div(\na q_1+z_1q_1\na\Phi)-\fr{D_1z_1\gamma_1}{\epsilon}\rho.\la{q1}
\eal
\ee
Using the fact that $q_1$ vanishes on the boundary, we multiply (\ref{q1}) by $\fr{|z_1|}{D_1}q_1$ and integrate by parts
\be
\bal
\fr{|z_1|}{2D_1}\fr{d}{dt}\|q_1\|_{L^2}^2+|z_1|\|\na q_1\|_{L^2}^2=&-\fr{z_1|z_1|}{2}\int_\Omega \na q_1^2\cdot\na\Phi\,dx-\fr{z_1|z_1|\gamma_1}{\epsilon}\int_\Omega\rho q_1\,dx\\
\le&-\fr{z_1|z_1|}{2\epsilon}\int_\Omega q_1^2\rho\,dx+C\|\rho\|_{L^2}\|q_1\|_{L^2}.\la{EE}
\eal
\ee
Now we add (\ref{DD}) to (\ref{EE}) to obtain
\be
\bal
&\fr{d}{dt}\left(\fr{|z_1|}{2D_1}\|q_1\|_{L^2}^2+\fr{|z_2|}{2D_2}\|c_2\|_{L^2}^2\right)+|z_1|\|\na q_1\|_{L^2}^2+\fr{|z_2|}{2}\|\na c_2\|_{L^2}^2\\
\le&-\fr{1}{2\epsilon}\int_\Omega (z_1|z_1|q_1^2+z_2|z_2|c_2^2)\rho\,dx+C(\|\rho\|_{L^2}^2+1)\|c_2\|_{L^2}^2+C\|\rho\|_{L^2}\|q_1\|_{L^2}\\
\le&-\fr{1}{2\epsilon}\int_\Omega (z_1|z_1|q_1^2+z_2|z_2|c_2^2)\rho\,dx+C(\|\rho\|_{L^2}^2+1)(\|q_1\|_{L^2}^2+\|c_2\|_{L^2}^2)+C.\la{W}
\eal
\ee
Next, we observe that because $z_1>0>z_2$ and denoting $\sigma=z_1c_1+|z_2|c_2\ge 0$,
\be
\bal
(z_1|z_1|q_1^2+z_2|z_2|c_2^2)\rho=(z_1^2q_1^2-z_2^2c_2^2)\rho=&(z_1q_1+|z_2|c_2)(z_1q_1+z_2c_2)\rho\\
=&(z_1c_1-z_1\gamma_1+|z_2|c_2)(z_1c_1-z_1\gamma_1+z_2c_2)\rho\\
=&(\sigma-z_1\gamma_1)(\rho-z_1\gamma_1)\rho\\
=&\rho^2\sigma-2z_1^2\gamma_1c_1\rho+z_1^2\gamma_1^2\rho\\
\ge&-2z_1^2\gamma_1c_1\rho+z_1^2\gamma_1^2\rho.\la{r3}
\eal
\ee
We note that in the last line, there is no cubic term. So using (\ref{r3}) together with Young's inequalities, we obtain from (\ref{W}),
\be
\bal
\fr{d}{dt}\mathcal{F}\le& C(\|\rho\|_{L^2}^2+1)+C(\|\rho\|_{L^2}^2+1)(\|q_1\|_{L^2}^2+\|c_2\|_{L^2}^2)\\
\le& C(\|\rho\|_{L^2}^2+1)+C(\|\rho\|_{L^2}^2+1)\mathcal{F}
\eal
\ee
for
\be
\mathcal{F}=\fr{|z_1|}{2D_1}\|q_1\|_{L^2}^2+\fr{|z_2|}{2D_2}\|c_2\|_{L^2}^2.
\ee
Thus recalling (\ref{RT}), a Grönwall estimate gives us
\be
\sup_{t\in[0,T]}\mathcal{F}(t)\le \left(\mathcal{F}(0)+C(R(T)+T)\right)\exp\left(C(R(T)+T)\right). 
\ee
This completes the proof of the proposition.
\end{proof}

As in the case of blocking boundary conditions, the $L^2$ bounds obtained in the previous proposition are sufficient, in the mixed boundary conditions setting, to obtain control of $c_i$ in the space $L^\infty_tH_x^1\cap L_t^2H_x^2$. Since the $L^2$ bound (\ref{M2t}) is time dependent, all higher regularity bounds are also time dependent. In particular, even for NPS, no time independent bounds of the type (\ref{M'}) are available.

\begin{thm}
For initial conditions $0\le c_i(0)\in H^1$, $u(0)\in V$ and for all $T>0$, NPS (\ref{np}),(\ref{pois}),(\ref{stokes}) for two oppositely charged species ($m=2,\,z_1>0>z_2$) has a unique strong solution $(c_i,\Phi,u)$ on the time interval $[0,T]$ satisfying the boundary conditions (\ref{di}),(\ref{2bl}),(\ref{noslip}),(\ref{robin}). Under the same hypotheses, NPNS (\ref{np}),(\ref{pois}),(\ref{nse}) for two oppositely charged species has a unique strong solution on $[0,T]$ satisfying the initial and boundary conditions provided
\be
U(T)=\int_0^T\|u(s)\|_V^4\,ds<\infty.
\ee
Moreover, the solution to NPS satisfies  
\begin{align}
\sup_{t\in[0,T]}\|c_i(t)\|_{H^1}^2+\int_0^T\|c_i(s)\|_{H^2}^2\,ds&\le M(T)\la{M(T)}\\
\sup_{t\in[0,T]}\|u(t)\|_V^2+\int_0^T\|u(s)\|_{H^2}^2\,ds&\le B'(T)\la{B'T}
\end{align}
for time dependent constants $M(T), B'(T)$ depending also on the parameters of the system and the initial conditions. The solution to NPNS satisfies 
\begin{align}
\sup_{t\in[0,T]}\|c_i(t)\|_{H^1}^2+\int_0^T\|c_i(s)\|_{H^2}^2\,ds&\le M_U(T)\la{MU(T)}\\
\sup_{t\in[0,T]}\|u(t)\|_V^2+\int_0^T\|u(s)\|_{H^2}^2\,ds&\le B'_U(T)\la{B'UT}
\end{align}
for time dependent constants $M_U(T),B'_U(T)$ depending also on the parameters of the system, the initial conditions, and $U(T)$.\la{gr!!}
\end{thm}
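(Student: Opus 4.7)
The plan is to take the time-dependent $L^2$ bound of Proposition \ref{gL2} as input and bootstrap, step for step, along the chain of estimates used in Section \ref{GR} to prove Theorem \ref{gr!}. Once we have $c_i \in L^\infty_t H^1_x \cap L^2_t H^2_x$ and $u \in L^\infty_t V \cap L^2_t H^2_x$ with $T$-dependent bounds, the local existence Theorem \ref{local} combined with the standard continuation argument (as invoked at the start of the proof of Theorem \ref{gr!}) yields global existence of the unique strong solution. All constants in this chain will inherit the dependence of $M_2(T) = \tilde M_2 e^{\tilde m_2 T}$ on $T$, which is consistent with the $T$-dependent conclusions $M(T)$, $B'(T)$, $M_U(T)$, $B'_U(T)$.

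The first step is a Moser iteration to promote the $L^2$ bound to a time-dependent $L^\infty$ bound. For $c_2$, blocking boundary conditions allow the iteration of Proposition \ref{L2} to proceed essentially verbatim. For $c_1$, I would run the iteration instead on $q_1 = c_1 - \gamma_1$, which satisfies (\ref{q1}) and vanishes on $\pa\Omega$; multiplication by $q_1^{2k-1}$ then produces no boundary contribution, and the additional source $-D_1 z_1 \gamma_1 \rho/\epsilon$ is absorbed using the $L^2$ control on $\rho$ from Proposition \ref{gL2} together with Young's inequalities. The resulting iteration has the same polynomial-in-$k$ structure as in Proposition \ref{L2}, so it produces $\|c_1(t)\|_{L^\infty} \le M_\infty^{(1)}(T)$. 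Elliptic regularity applied to (\ref{pois}) with Robin boundary conditions then yields $\|\Phi(t)\|_{W^{1,\infty}} \le P_\infty(T)$.

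The second step is the $H^1$ estimate, replicating (\ref{dci})-(\ref{I32}). For $c_2$ one uses $\tilde c_2 = c_2 e^{z_2 \Phi}$ exactly as in Theorem \ref{gr!}. For $c_1$ one can use $\tilde q_1 = q_1 e^{z_1 \Phi}$, which again vanishes on $\pa\Omega$, so that multiplication by $-\Delta \tilde q_1$ produces no boundary term; the additional lower-order contributions from $\gamma_1$ and the change of variables are bounded using the $L^\infty$ controls already in hand. The $\pa_t \Phi$ estimate (\ref{ptg})-(\ref{pt6}) transfers almost verbatim since the Robin boundary condition for $\Phi$ is unchanged; the identity (\ref{ptrho}) needs only to be rewritten to reflect the Dirichlet structure for $c_1$, using $\pa_t \gamma_1 = 0$ so that no additional boundary term appears. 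A Gr\"onwall argument on the resulting differential inequality yields (\ref{M(T)}) for NPS and (\ref{MU(T)}) for NPNS (conditional on $U(T) < \infty$). The velocity bounds (\ref{B'T}) and (\ref{B'UT}) then follow from the Stokes and Navier-Stokes estimates (\ref{nn})-(\ref{nnn}) in Proposition \ref{L2}, which use only $L^\infty$ control of $\rho$ and $\na\Phi$ and are otherwise insensitive to the boundary conditions on $c_i$.

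The principal obstacle is the Moser iteration for $c_1$: Dirichlet boundary conditions preclude the $L^1$-conservation implicit in the blocking case, and the change of variables $\tci = c_i e^{z_i \Phi}$ no longer lands in a clean Neumann problem. The passage through $q_1 = c_1 - \gamma_1$ is what restores the vanishing trace needed for the iteration; verifying that the resulting iteration constants summate despite the exponential-in-$T$ baseline amounts to noting that the doubling step of (\ref{s2j}) only depends polynomially on $k$, so summability in $k$ is independent of $T$ and the full iteration closes with a $T$-dependent but finite $L^\infty$ bound.
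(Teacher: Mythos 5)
Your overall strategy (bootstrap the time-dependent $L^2$ bound of Proposition \ref{gL2} through the machinery of Section \ref{GR} and close with the continuation argument) is the right one, and your treatment of $c_2$ via $\tilde c_2=c_2e^{z_2\Phi}$ matches the paper. The main gap is your claim that the $\pa_t\Phi$ estimate (\ref{ptrho})--(\ref{pt6}) ``transfers almost verbatim'' with ``no additional boundary term.'' The boundary term there does not come from time-differentiating the boundary data (so $\pa_t\gamma_1=0$ is beside the point); it comes from integrating $\int_\Omega\div(e^{-z_1\Phi}\na\tilde c_1)\,\pa_t\Phi\,dx$ by parts, which produces $\int_{\pa\Omega}e^{-z_1\Phi}\,\pa_n\tilde c_1\,\pa_t\Phi\,dS$. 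Under blocking conditions $\pa_n\tilde c_i=0$ and this vanishes, but under the Dirichlet condition $c_1=\gamma_1$ the normal flux $\pa_n c_1+z_1c_1\pa_n\Phi$ has no reason to vanish (physically the selective membrane passes cation flux), so this term is not controlled by $\|\na\tilde c_1\|_{L^2}$; avoiding the integration by parts instead costs $\|\D\tilde c_1\|_{L^2}$ in the bound for $\|\pa_t\Phi\|_{L^6}$. This is exactly why the paper orders the steps differently: it first estimates $c_1$ working directly with $q_1$, whose equation (\ref{q1}) involves only $\na\Phi$ and $\rho$ and no $\pa_t\Phi$, obtains the $L^\infty_tH^1_x\cap L^2_tH^2_x$ bound for $c_1$, and only then estimates $\tilde c_2$, using the finiteness of $\int_0^T\|\D\tilde c_1\|_{L^2}^2\,dt$ to absorb the new term in the $\pa_t\Phi$ bound via Gr\"onwall. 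Your choice of $\tilde q_1=q_1e^{z_1\Phi}$ inserts $\pa_t\Phi$ into the $c_1$ equation as well, so once the boundary term is treated honestly your scheme becomes circular (the $c_1$ estimate needs $\pa_t\Phi$, and $\pa_t\Phi$ needs $H^2$-level control of $c_1$) unless you reorganize it along the paper's lines.

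A second, smaller issue: Theorem \ref{gr!!} assumes only $c_i(0)\in H^1$, not $H^1\cap L^\infty$, so the Moser iteration of Proposition \ref{L2}, whose iterates $S_k$ are anchored at $\|c_i(0)\|_{L^\infty}$, cannot be run ``essentially verbatim,'' either for $c_2$ or for $q_1$. The paper sidesteps this by stopping at an $L^4$ bound (multiplying by $c_2^3$ and $q_1^3$), which suffices because $\rho\in L^4$ already gives $\|\Phi\|_{W^{1,\infty}}\le P_\infty(T)$ through $W^{2,4}\hookrightarrow W^{1,\infty}$, and $H^1\hookrightarrow L^4$ covers the initial data; the velocity bounds and the $H^1$ estimates for $c_1$, $\tilde c_2$ then only use $M_2(T)$, $M_4(T)$ and $P_\infty(T)$, never an $L^\infty$ bound on $c_i$. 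If you insist on a genuine $L^\infty$ bound you would need either the extra hypothesis $c_i(0)\in L^\infty$ or a smoothing-in-time iteration, neither of which is in your sketch.
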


\begin{rem}
{We show in the proof that $M(T),B'(T),M_U(T),B'_U(T)$ satisfy the following asymptotic bounds
\begin{align}
    M(T)\lesssim& \exp\exp\exp\exp(CT^3)\\
    B'(T)\lesssim& \exp\exp(CT^3)\\
    M_U(T)\lesssim&\exp\left(\exp\exp\exp(CT^3)\exp(CU(T))\right)\\
    B'_U(T)\lesssim&\exp\exp(CT^3)\exp(CU(T)).
\end{align}
These bounds follow from using \eqref{M2t} and bootstrapping.}
\end{rem}

\begin{proof}
As in the proofs of Theorems \ref{gr!} and \ref{grm}, global existence of strong solutions follows from the local existence theorem and the a priori bounds (\ref{M(T)})-(\ref{B'UT}).

\textbf{Step 1. $L^\infty_tL^4_x$ bounds for $c_i$. $L^\infty_t W^{1,\infty}_x$ bounds for $\Phi$. $L^\infty_tV\cap L^2_tH^2_x$ bounds for $u$.} From (\ref{M2t}) and Sobolev estimates, we obtain as in (\ref{P6}),
\be
\|\na\Phi\|_{L^6}\le P_6(T) \la{P6T}
\ee
for some time dependent constant $P_6(T)$, {which asymptotically satisfies $P_6(T)\lesssim e^{CT^3}$ (c.f. \eqref{M2t}).} Then multiplying (\ref{np}) for $i=2$ by $c_2^3$, and integrating by parts, we find (cf. (\ref{kk})),
\be
\bal
\fr{1}{4}\fr{d}{dt}\|c_2\|_{L^4}^4+\fr{3}{4}D_2\|\na c_2^2\|_{L^2}^2\le CP_6(T)\|c_2^2\|_{L^3}\|\na c_2^2\|_{L^2}
\eal
\ee
which, after interpolating $L^3$ between $L^2$ and $H^1$ and using a Young's inequality, gives
\be
\fr{1}{4}\fr{d}{dt}\|c_2\|_{L^4}^4+\fr{1}{4}D_2\|\na c_2^2\|_{L^2}^2\le CP_6(T)^4\|c_2\|_{L^4}^4.
\ee
Therefore, for a time dependent constant $M'_4(T)$ we have
\be
\|c_2\|_{L^4}\le M'_4(T)\la{M'4T},
\ee
{where $M_4'(T)\lesssim \exp\exp(CT^3)$}. Then multiplying (\ref{q1}) by $q_1^3$ and integrating by parts, we obtain
\be
\fr{1}{4}\fr{d}{dt}\|q_1\|_{L^4}^4+\fr{3}{4}D_1\|\na q_1^2\|_{L^2}^2\le CP_6(T)\|q_1^2\|_{L^3}\|\na q_1^2\|_{L^2}+C(1+\|q_1\|_{L^4}^4+\|c_2\|_{L^4}^4)
\ee
which gives after an interpolation,
\be
\fr{1}{4}\fr{d}{dt}\|q_1\|_{L^4}^4+\fr{1}{4}D_1\|\na q_1^2\|_{L^2}^2\le C(P_6(T)^4+1)\|q_1\|_{L^4}^4+C(1+\|c_2\|_{L^4}^4).
\ee
This, together with (\ref{M'4T}), allows us to conclude that there exists a time dependent constant $M_4(T)$ such that for each $i$,
\be
\|c_i\|_{L^4}\le M_4(T)\la{M4T}
\ee
{with $M_4(T)\lesssim \exp\exp(CT^3)$}. Furthermore, since $W^{2,4}\hookrightarrow W^{1,\infty}$, we have for a time dependent constant $P_\infty(T)$,
\be
\|\Phi\|_{W^{1,\infty}}\le P_\infty(T)\la{PiT}
\ee
{with $P_\infty(T)\lesssim \exp\exp(CT^3)$}. At this point, we have established enough bounds to mimic the derivations of (\ref{unps}), (\ref{nnn}) to obtain the bounds (\ref{B'T}), (\ref{B'UT}) {for $u$ with $B'(T)\lesssim \exp\exp(CT^3)$ and $B'_U(T)\lesssim \exp\exp(CT^3)\exp (CU(T))$.}

\textbf{Step 2. $L^\infty_tH^1_x\cap L^2_tH^2_x$ bounds for $c_1$.} Now we bound derivatives of $c_1$. First we multiply (\ref{q1}) by $-\D q_1$ and integrate by parts to obtain
\be
\bal
\fr{1}{2}\fr{d}{dt}\|\na q_1\|_{L^2}^2+D_1\|\D q_1\|_{L^2}^2\le& C\|u\|_V\|\na q_1\|^\fr{1}{2}_{L^2}\|\D q_1\|_{L^2}^\fr{3}{2}+CP_\infty(T)\|\na q_1\|_{L^2}\|\D q_1\|_{L^2}\\
&+C(M_4(T)^2+1)\|\D q_1\|_{L^2}+CM_2(T)\|\D q_1\|_{L^2}
\eal
\ee
so that from Young's inequalities, we obtain
\be
\fr{1}{2}\fr{d}{dt}\|\na q_1\|_{L^2}^2+\fr{D_1}{2}\|\D q_1\|_{L^2}^2\le C(T)+C(P_\infty(T)^2+\|u\|_V^4)\|\na q_1\|_{L^2}^2\la{naq1}
\ee
for a time dependent constant {$C(T)\lesssim \exp\exp(CT^3)$}. Using a Grönwall estimate, from (\ref{naq1}), we obtain (\ref{MU(T)}) for $c_1$ in the case of NPNS {with $M_U(T)\lesssim \exp\exp\exp(CT^3)\exp (CU(T))$}. In the case of NPS, (\ref{naq1}) together with (\ref{B'T}) gives us (\ref{M(T)}) for $c_1$ {with $M(T)\lesssim \exp\exp\exp(CT^3)$}.

\textbf{Step 3. $L^\infty_tH^1_x\cap L^2_tH^2_x$ bounds for $c_2$.} To obtain the corresponding bounds for $c_2$, we start with (\ref{dci}), which holds verbatim for $i=2$:
\be
\bal
\fr{1}{2}\fr{d}{dt}\|\na\tilde c_2\|_{L^2}^2+D_2\|\D\tilde c_2\|_{L^2}^2=&\int_\Omega (u\cdot\na\tilde c_2)\D\tilde c_2\,dx+D_2z_2\int_\Omega(\na\tilde c_2\cdot\na\Phi)\D\tilde c_2\,dx\\
&-z_2\int_\Omega((\pa_t+u\cdot\na)\Phi)\tilde c_2\D\tilde c_2\,dx\\
=& I_1+I_2+I_3.
\eal
\ee
The term $I_1$ is bounded identically (see (\ref{id})):
\be
|I_1|\le\fr{D_2}{4}\|\D \tilde c_2\|_{L^2}^2+C\|u\|_V^4\|\na\tilde c_2\|_{L^2}^2.
\ee
The term $I_2$ is also bounded identically (see (\ref{idd})), noting that this time we have a time dependent coefficient $C_g(T)$, depending on $P_\infty(T)$ {so that $C_g(T)\lesssim\exp\exp(CT^3)$}:
\be
|I_2|\le\fr{D_2}{4}\|\D\tilde c_2\|_{L^2}^2+C_g(T)\|\na\tilde c_2\|_{L^2}^2.
\ee
As for the term $I_3=I_3^1+I_3^2$ (see (\ref{split})), we note that
\be
\|\tilde{c}_2\|_{L^3}\le CM_4(T)e^{|z_2|P_\infty(T)}=\beta_3(T){\lesssim\exp\exp\exp(CT^3)}
\ee
so that
\be
\bal
|I_3^1|\le& C\|u\|_V\|\na\Phi\|_{L^\infty}\|\tilde{c}_2\|_{L^3}\|\D \tilde{c}_2\|_{L^2}\\
\le&\fr{D_2}{8}\|\D\tilde{c}_2\|_{L^2}^2+C'_g(T)\|u\|_V^2
\eal
\ee
where {$C'_g(T)\lesssim \exp\exp\exp(CT^3)$.} Lastly, in order to bound $I_3^2$, we first estimate $\pa_t\na\Phi$ as in (\ref{ptrho})-(\ref{ptg}), but this time we work around the fact that ${\pa_n \tilde c_1}_{|\pa\Omega}=0$ does not hold:
\be
\bal
\tau\|\pa_t\Phi\|&_{L^2(\pa\Omega)}^2+\|\pa_t\na\Phi\|_{L^2(\Omega)}^2\\\le& C\sum_{j=1}^2\left|\int_\Omega\div(e^{-z_j\Phi}\na\tcj)\pa_t\Phi\,dx\right|+C\left|\int_\Omega\div(u\rho)\pa_t\Phi\,dx\right|\\
\le&C\int_\Omega e^{|z_2|P_\infty(T)}|\na \tilde c_2||\pa_t\na\Phi|\,dx+C\left|\int_\Omega \div (e^{-z_1\Phi}\na\tilde c_1)\pa_t\Phi\,dx\right|\\
&+C\int_\Omega |\rho||u||\pa_t\na\Phi|\,dx\\
\le&C\int_\Omega e^{|z_2|P_\infty(T)}|\na \tilde c_2||\pa_t\na\Phi|\,dx+CP_\infty(T) e^{|z_1|P_\infty(T)}\int_\Omega  |\na\tilde c_1||\pa_t\Phi|\,dx\\
&+Ce^{|z_1|P_\infty(T)}\int_\Omega |\D\tilde c_1||\pa_t\Phi|\,dx+C\int_\Omega |\rho||u||\pa_t\na\Phi|\,dx\\
\le&C_{t}(T)(\sum_{j=1}^2\|\na\tcj\|_{L^2}+\|\D\tilde c_1\|_{L^2}+\|u\|_V)(\|\pa_t\na\Phi\|_{L^2(\Omega)}^2+\|\pa_t\Phi\|_{L^2(\pa\Omega)}^2)^\fr{1}{2}
\eal
\ee
where {$C_t(T)\lesssim \exp\exp\exp(CT^3)$}, and we used $\|\pa_t\Phi\|_{L^2}\le C(\|\pa_t\na\Phi\|_{L^2(\Omega)}^2+\|\pa_t\Phi\|_{L^2(\pa\Omega)}^2)^\fr{1}{2}$. Then recalling that we already have the bounds (\ref{M(T)}), (\ref{MU(T)}) for $c_1$, we obtain as in (\ref{pt6}), using the embedding $H^1\hookrightarrow L^6$,
\be
\|\pa_t\Phi\|_{L^6}\le C_{t}(T,U(T))(\|\na\tilde{c}_2\|_{L^2}+\|\D\tilde c_1\|_{L^2}+\|u\|_V+1)\la{pt6t}
\ee
for a time dependent constant {$C_t(T,U(T))\lesssim \exp\exp\exp(CT^3)\exp(CU(T))$ (though in the case of NPS, the dependence on $U(T)$ is redundant and may be dropped)}. Thus we bound $I_3^2$ as in (\ref{I32}),
\be
\bal
|I_3^2|\le\fr{D_2}{8}\|\D\tilde c_2\|_{L^2}^2+C_3(T,U(T))(\|\na\tilde{c}_2\|_{L^2}^2+\|\D\tilde c_1\|_{L^2}^2+\|u\|_V^2+1)\la{I32t}
\eal
\ee
{with $C_3(T,U(T))\lesssim \exp\exp\exp(CT^3)\exp(CU(T))$}. Collecting our estimates for $I_1,I_2,I_3$, we obtain as in (\ref{cru}),
\be
\bal
&\fr{1}{2}\fr{d}{dt}\|\na \tilde{c}_2\|_{L^2}^2+\fr{D_2}{4}\|\D\tilde{c}_2\|_{L^2}^2\\
\le& (C_F(T,U(T))+C\|u\|_V^4)\|\na \tilde{c}_2\|_{L^2}^2+C'_F(T,U(T))(\|\D\tilde c_1\|_{L^2}^2+\|u\|_V^2+1)\la{crut}
\eal
\ee
{with $C_F(T,U(T)),C_F'(T,U(T))\lesssim \exp\exp\exp(CT^3)\exp(CU(T))$}. It is straightforward to verify using the chain rule that
\be
\|\D\tilde c_1\|_{L^2}\le C_P(T)(\|\D c_1\|_{L^2}+\|\na c_1\|_{L^2}+\|c_1\|_{L^2}+\|c_1\rho\|_{L^2})
\ee
for a time dependent constant $C_P(T)$ depending on $P_\infty(T)$ {so that $C_P(T)\lesssim \exp\exp\exp(CT^3)$}. So, by the previously established bounds (\ref{M(T)}), (\ref{MU(T)}) for $c_1$ (cf. (\ref{naq1})), together with (\ref{M4T}), we have
\be
\int_0^T\|\D\tilde c_1\|_{L^2}^2\,dx={S(T,U(T))\lesssim \exp\exp\exp(CT^3)\exp(CU(T))} .\la{ST}
\ee
Then from (\ref{crut}), a Grönwall estimate together with (\ref{ST}) gives us (\ref{MU(T)}) for $i=2$ in the case of NPNS, after we convert back to the variable $c_2$, with $M_U(T)\lesssim\exp(\exp\exp\exp(CT^3)\exp(CU(T)))$. In the case of NPS, (\ref{B'T}) together with (\ref{crut}) and (\ref{ST}) gives us (\ref{M(T)}) for $i=2$ {with $M_U(T)\lesssim \exp\exp\exp\exp(CT^3)$}. This completes the proof.
\end{proof}

\appendix
\section{Trace Inequalities}
\begin{lemma}\la{trace}
Let $\Omega\subset\mathbb{R}^3$ be an open, bounded domain with Lipschitz boundary. Then for $p\in[2,4]$, we have the embedding $H^1(\Omega)\hookrightarrow L^p(\pa\Omega).$ Moreover, for $p\in [2,4]$, there exist constants $C_p, C'_p$ depending only on $\Omega$ and $p$ such that
\be
\|f\|_{L^p(\pa\Omega)}\le C_p\|\na f\|_{L^2(\Omega)}^\fr{1}{p}\|f\|_{L^{2(p-1)}(\Omega)}^\fr{p-1}{p}+C'_p\|f\|_{L^p(\Omega)}.\la{T1}
\ee
In particular, for $p=4$, there exists a constant $c_4$ depending only on $\Omega$ such that 
\be
\|f\|_{L^4(\pa\Omega)}\le c_4\|f\|_{H^1(\Omega)}.\la{T2}
\ee
And, for $p\in[2,4)$ and any $\gamma>0$ there exists  $C_\gamma$ depending on $\Omega,p$ and $\gamma$ such that 
\be
\|f\|_{L^p(\pa\Omega)}^2\le \gamma\|\na f\|_{L^2(\Omega)}^2+ C_\gamma\|f\|_{L^1(\Omega)}^2.\la{T3}
\ee

\begin{proof}
The proof is based on the proof of Theorem 1.5.1.10 in \cite{grisvard}. Because $C^1(\bar\Omega)$ is dense in $H^1(\Omega)$, we assume without loss of generality that $f\in C^1(\bar\Omega).$ Next we use the fact (Lemma 1.5.1.9, \cite{grisvard}) that for bounded Lipschitz domains $\Omega\subset\mathbb{R}^3$, there exists $\mu\in (C^\infty(\bar\Omega))^3$ and a constant $\delta>0$ such that $\mu\cdot n\ge \delta$ almost everywhere on $\pa\Omega$, where $n$ is the outward pointing normal vector on $\pa\Omega$. Then, on one hand, we have
\be
\int_\Omega \na |f|^p\cdot\mu\,dx=p\int_\Omega |f|^{p-2}f\na f\cdot\mu\,dx.
\ee
On the other hand, integrating by parts, we have
\be
\int_\Omega \na |f|^p\cdot\mu\,dx=\int_{\pa\Omega} |f|^p\mu\cdot n\,dS-\int_\Omega |f|^p\div\mu\,dx.
\ee
Therefore,
\be
\bal
\delta\int_{\pa\Omega}|f|^p\,dS\le&\int_{\pa\Omega} |f|^p\mu\cdot n\,dS\\
=&p\int_\Omega |f|^{p-2}f\na f\cdot\mu\,dx+\int_\Omega |f|^p\div\mu\,dx\\
\le& p\|\mu\|_{L^\infty(\Omega)}\int_\Omega |f|^{p-1}|\na f|\,dx+\|\div\mu\|_{L^\infty(\Omega)}\int_\Omega|f|^p\,dx\\
\le& p\|\mu\|_{L^\infty(\Omega)}\|\na f\|_{L^2(\Omega)}\|f\|_{L^{2(p-1)}(\Omega)}^{p-1}+\|\div\mu\|_{L^\infty(\Omega)}\|f\|_{L^p(\Omega)}^p\la{tt}
\eal
\ee
where the last line follows from a Hölder inequality. Then, (\ref{T1}) follows from taking both sides of (\ref{tt}) to the $p^{-1}$ power. Taking $p=4$ in (\ref{T1}) we have
\be
\|f\|_{L^4(\pa\Omega)}\le C_4\|\na f\|_{L^2(\Omega)}^\fr{1}{4}\|f\|_{L^6(\Omega)}^\fr{3}{4}+C'_4\|f\|_{L^4(\Omega)},
\ee
so (\ref{T2}) follows from the embedding $H^1(\Omega)\hookrightarrow L^6(\Omega)$. Lastly, (\ref{T3}) follows from (\ref{T1}) by interpolating the spaces $L^{2(p-1)}(\Omega)$ and $L^p(\Omega)$ between $L^1(\Omega)$ and $H^1(\Omega)$, followed by Young's inequalities.
\end{proof}
\end{lemma}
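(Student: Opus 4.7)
The plan is to follow a classical argument via a transverse vector field, as in Grisvard's treatment. First I would reduce to $f\in C^1(\bar\Omega)$ by density of smooth functions in $H^1(\Omega)$, since all three inequalities are continuous in the $H^1$-norm once established on smooth functions. Next I would invoke the standard fact that on a bounded Lipschitz domain there exists $\mu\in(C^\infty(\bar\Omega))^3$ and $\delta>0$ with $\mu\cdot n\ge\delta$ a.e.\ on $\partial\Omega$. Computing $\int_\Omega\nabla|f|^p\cdot\mu\,dx$ in two ways (via the product rule and via integration by parts with the boundary term $\int_{\partial\Omega}|f|^p\mu\cdot n\,dS\ge\delta\|f\|_{L^p(\partial\Omega)}^p$) yields
\[
\delta\|f\|_{L^p(\partial\Omega)}^p\le p\|\mu\|_{L^\infty}\int_\Omega|f|^{p-1}|\nabla f|\,dx+\|\div\mu\|_{L^\infty}\|f\|_{L^p(\Omega)}^p.
\]
A single application of Hölder with exponents $(2,2)$ on the first integral produces $\|\nabla f\|_{L^2(\Omega)}\|f\|_{L^{2(p-1)}(\Omega)}^{p-1}$, and taking $p$-th roots (together with the elementary inequality $(a+b)^{1/p}\le a^{1/p}+b^{1/p}$ for $a,b\ge 0$) gives (\ref{T1}).

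For (\ref{T2}) I would specialize to $p=4$, so that $2(p-1)=6$, and apply the Sobolev embedding $H^1(\Omega)\hookrightarrow L^6(\Omega)$ (valid since $\dim\Omega=3$) to bound $\|f\|_{L^6(\Omega)}^{3/4}$ by $C\|f\|_{H^1(\Omega)}^{3/4}$. Combining this with $\|\nabla f\|_{L^2}^{1/4}\le\|f\|_{H^1}^{1/4}$ and $\|f\|_{L^4}\le C\|f\|_{H^1}$ collapses the right-hand side of (\ref{T1}) to $c_4\|f\|_{H^1(\Omega)}$, as claimed. This also establishes the embedding $H^1(\Omega)\hookrightarrow L^p(\partial\Omega)$ for $p=4$, and interpolation with the trivial case $p=2$ (which follows from (\ref{T1}) with $p=2$ and $H^1\hookrightarrow L^2$) covers the full range $p\in[2,4]$.

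For (\ref{T3}), the key observation is that $p\in[2,4)$ forces $2(p-1)<6$, so we can interpolate $L^{2(p-1)}(\Omega)$ between $L^1(\Omega)$ and $L^6(\Omega)$ (the latter controlled by $H^1$ via Sobolev). Writing $\|f\|_{L^{2(p-1)}}\le \|f\|_{L^1}^{\theta}\|f\|_{L^6}^{1-\theta}$ for the appropriate $\theta=\theta(p)\in(0,1]$, and similarly handling the $\|f\|_{L^p(\Omega)}$ term, substituting into (\ref{T1}) produces a bound of the form
\[
\|f\|_{L^p(\partial\Omega)}^2\le C\|\nabla f\|_{L^2}^{2\alpha}\|f\|_{L^1}^{2(1-\alpha)}+(\text{lower order in }\|\nabla f\|_{L^2})
\]
for some $\alpha=\alpha(p)<1$; this exponent is strictly less than $1$ precisely because $p<4$. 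A Young's inequality with an appropriately small factor then splits this into $\gamma\|\nabla f\|_{L^2}^2+C_\gamma\|f\|_{L^1}^2$, which is exactly (\ref{T3}). The only mildly delicate point, and the main thing to keep track of, is the bookkeeping of interpolation exponents to confirm the power on $\|\nabla f\|_{L^2}$ stays strictly below $2$ throughout the range $p\in[2,4)$ so that the Young's inequality step goes through; the endpoint $p=4$ is precisely where this fails, which is why only the weaker $H^1$ bound (\ref{T2}) is available there.
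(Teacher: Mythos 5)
Your proposal is correct and follows essentially the same route as the paper's proof: density, Grisvard's transverse vector field $\mu$ with $\mu\cdot n\ge\delta$, the two-sided computation of $\int_\Omega\nabla|f|^p\cdot\mu\,dx$, Hölder and $p$-th roots for (\ref{T1}), the $L^6$ Sobolev embedding for (\ref{T2}), and interpolation against $L^1$ plus Young for (\ref{T3}). Your explicit check that the resulting exponent on $\|\na f\|_{L^2}$ stays strictly below $2$ exactly when $p<4$ is the right bookkeeping and matches what the paper leaves implicit.
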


\section{Positivity of Concentrations}\la{pc}
In this section, we verify that positivity of the ionic concentrations, $c_i$, is propagated in time by the Nernst-Planck equations.
\begin{prop}
Suppose $(c_1,c_2,u)$ is a strong solution to NPNS (\ref{np}), (\ref{pois}), (\ref{nse}) or NPS (\ref{np}), (\ref{pois}), (\ref{stokes}) with blocking boundary conditions (\ref{bl}), (\ref{noslip}), (\ref{robin}) or mixed boundary conditions (\ref{di}), (\ref{2bl}), (\ref{noslip}), (\ref{robin}) with strictly positive initial conditions $0<c\le c_i(0)\in H^1(\Omega)$. Then, $c_i(t)>0$ for all $t\ge 0$.
\end{prop}
\begin{proof}
We split the proof into two steps. First we show that $c_i(t)\ge 0$ for all $t\ge 0$. This part only uses $c_i(0)\ge 0$ and can be shown as in \cite{ci,cil}. We fix a convex function on the real line that is positive on the negative semiaxis and zero on the positive semiaxis. For example, we take the function
\be
F(y)=\begin{cases}
y^4,& y<0\\
0,&y\ge 0.
\end{cases}
\ee
We observe that $F$ satisfies, for all $y\in\mathbb{R}$,
\be
F''(y)y^2\le 12 F(y).
\ee
Now we multiply (\ref{np}) by $F'(c_i)$ and integrate by parts, noting that due to the choice of $F$, no boundary terms occur for blocking boundary conditions nor for mixed boundary conditions:
\be
\bal
\fr{d}{dt}\int_\Omega F(c_i)\,dx=&-D_i\int_\Omega|\na c_i|^2 F''(c_i)\,dx- D_iz_i\int_\Omega c_iF''(c_i)\na c_i\cdot\na\Phi\,dx\\
\le&-\fr{D_i}{2}\int_\Omega |\na c_i|^2F''(c_i)\,dx+\fr{D_iz_i^2}{2}\int_\Omega F''(c_i)c_i^2|\na\Phi|^2\,dx\\
\le&6D_i\|\na\Phi\|_{L^\infty}^2\int_\Omega F(c_i)\,dx.\la{FF}
\eal
\ee
We note that the advective term involving $u$ vanishes due to $\div u=0$. It follows from (\ref{FF}) that
\be
\int_\Omega F(c_i(t))\,dx\le \left(\int_\Omega F(c_i(0))\,dx\right)\exp\left(6D_i\int_0^t\|\na\Phi(s)\|_{L^\infty}^2\,ds\right).
\ee
For strong solutions, the time integral
\be
\int_0^t\|\na\Phi(s)\|_{L^\infty}^2\,ds
\ee
is finite, and thus since $F(c_i(0))=0$ on $\Omega$, it follows that
\be
\int_\Omega F(c_i(t))=0
\ee
which implies $c_i(t)\ge 0$. This proves the nonnegativity of $c_i$.

Improving this result to strict positivity requires an additional argument. We adapt the argument given in \cite{gaj}. We first fix a time interval $[0,T]$, and we assume that $c_i$ satisfy blocking boundary conditions. We then fix $\delta>0$ and multiply (\ref{np}) by $-\fr{1}{(c_i+\delta)^2}$ and integrate by parts. Then on the left hand side, we obtain
\be
\fr{d}{dt}\int_\Omega \fr{1}{c_i+\delta}\,dx.
\ee
On the right hand side, we have
\be
-2D_i\int_\Omega \fr{|\na c_i|^2}{(c_i+\delta)^3}\,dx+2D_iz_i\int_\Omega c_i\na\Phi\cdot\fr{\na c_i}{(c_i+\delta)^3}\,dx=I_1+I_2.
\ee
The integral $I_1$ is nonpositive and the second integral $I_2$ can be estimated as follows, using Young's inequality
\be
|I_2|\le D_i\int_\Omega \fr{|\na c_i|^2}{(c_i+\delta)^3}\,dx+C_T\int_\Omega \fr{1}{c_i+\delta}\,dx
\ee
where $C_T$ depends on parameters and on $\sup_{t\in [0,T]}\|\na\Phi(t)\|_{L^\infty}$ but not on $\delta$. Thus we have that $L_1:=\int_\Omega \fr{1}{c_i+\delta}\,dx$ satisfies $\fr{d}{dt}L_1\le C_TL_1$, and thus
\be
\sup_{t\in[0,T]}L_1(t)\le e^{C_T T}\int_\Omega \fr{1}{c_i(0)+\delta}\,dx\le e^{C_T T}\fr{|\Omega|}{c}\la{l1t}
\ee
where we note that the final upper bound does not depend on $\delta$.

Now the idea is to bootstrap to obtain bounds on $L_{2^k}:=\int_\Omega \fr{1}{(c_i+\delta)^{2^k}}$ for $k=1,2,3,...$ exactly as was done to control $\|c_i\|_{L^\infty}$ in Proposition \ref{L2}. 

To this end, we multiply (\ref{np}) by $\fr{-j+1}{(c_i+\delta)^{j}}$ ($j=3,4,...$) and integrate by parts. This yields
\be
\bal
\fr{d}{dt}\left\|\fr{1}{(c_i+\delta)^\fr{j-1}{2}}\right\|_{L^2}^2+4D_i\left\|\na\fr{1}{(c_i+\delta)^\fr{j-1}{2}}\right\|_{L^2}^2\le& D_i|z_i|(j-1)\left|\int_\Omega c_i\na\Phi\cdot\na (c_i+\delta)^{-j}\,dx\right|\\
\le& 2D_i|z_i|j\|\na\Phi\|_{L^\infty}\int_\Omega|\na (c_i+\delta)^\fr{-j+1}{2}|(c_i+\delta)^\fr{-j+1}{2}\,dx\\
\le& 2D_i\left\|\na\fr{1}{(c_i+\delta)^\fr{j-1}{2}}\right\|_{L^2}^2\\
&+2D_i|z_i|^2j^2\|\na\Phi\|_{L^\infty}^2\left\|\fr{1}{(c_i+\delta)^\fr{j-1}{2}}\right\|_{L^2}^2\la{agag}
\eal
\ee
Then, using the interpolation estimate
\be
\left\|\fr{1}{(c_i+\delta)^\fr{j-1}{2}}\right\|_{L^2}\le C\left(\left\|\na\fr{1}{(c_i+\delta)^\fr{j-1}{2}}\right\|_{L^2}^\fr{3}{5}\left\|\fr{1}{(c_i+\delta)^\fr{j-1}{2}}\right\|_{L^1}^\fr{2}{5}+\left\|\fr{1}{(c_i+\delta)^\fr{j-1}{2}}\right\|_{L^1}\right)\la{int1}
\ee
followed by a Young's inequality, we obtain from (\ref{agag})
\be
\fr{d}{dt}\left\|\fr{1}{(c_i+\delta)^\fr{j-1}{2}}\right\|_{L^2}^2+D_i\left\|\na\fr{1}{(c_i+\delta)^\fr{j-1}{2}}\right\|_{L^2}^2\le \bar{c}j^l\left\|\fr{1}{(c_i+\delta)^\fr{j-1}{2}}\right\|_{L^1}^2
\ee
for some $l>0$ large enough and $\bar c$ depending on parameters, the domain, $\sup_{t\in[0,T]}\|\na\Phi(t)\|_{L^\infty}$, but not on $j$. Then, again using the interpolation estimate  (\ref{int1}) followed by a Young's inequality, we obtain
\be
\fr{d}{dt}\left\|\fr{1}{(c_i+\delta)^\fr{j-1}{2}}\right\|_{L^2}^2\le -C\left\|\fr{1}{(c_i+\delta)^\fr{j-1}{2}}\right\|_{L^2}^2+ C_j\left\|\fr{1}{(c_i+\delta)^\fr{j-1}{2}}\right\|_{L^1}^2\la{jj}
\ee
where $C_j\le \tilde c j^l$ for some $\tilde c$ not depending on $j$. Now we set $j=2k+1$ where $k$ is a nonnegative integer, and we rewrite (\ref{jj}) as
\be
\fr{d}{dt}\left\|\fr{1}{c_i+\delta}\right\|_{L^{2k}}^{2k}\le -C \left\|\fr{1}{c_i+\delta}\right\|_{L^{2k}}^{2k} +C_{2k+1}\left\|\fr{1}{c_i+\delta}\right\|_{L^{k}}^{2k}.\la{2kk}
\ee
Then, applying Grönwall's inequality to (\ref{2kk}), we obtain as in (\ref{2k'})-(\ref{ss}), for some $\bar C>0$ independent of $k$,

\be
T_{2k}\le {\bar C}^\fr{1}{2k}k^\fr{l}{2k}T_k
\ee
where 
\be
T_k=\max\left\{\left\|\fr{1}{c_i(0)+\delta}\right\|_{L^\infty},\sup_{t\in [0,T]}\left\|\fr{1}{c_i(t)+\delta}\right\|_{L^k}\right\}.
\ee
Now setting $k=2^\kappa$ we have
\be
T_{2^{\kappa+1}}\le {\bar C}^\fr{1}{2^{\kappa+1}}2^\fr{\kappa l}{2^{\kappa+1}}T_{2^\kappa}
\ee
from which it follows that for all $J\in\mathbb{N}$
\be
T_{2^J}\le {\bar C}^a2^bT_1\la{t2j}
\ee
where
\be
a=\sum_{\kappa=0}^\infty \fr{1}{2^{\kappa+1}}<\infty,\quad b=\sum_{\kappa=0}^\infty\fr{\kappa l}{2^{\kappa+1}}<\infty.
\ee
So passing $J\to\infty$ in (\ref{t2j}) we find that
\be
\bal
\sup_{t\in[0,T]}\left\|\fr{1}{c_i(t)+\delta}\right\|_{L^\infty}\le& {\bar C}^a 2^b\max\left\{\left\|\fr{1}{c_i(0)+\delta}\right\|_{L^\infty},\sup_{t\in [0,T]}\left\|\fr{1}{c_i(t)+\delta}\right\|_{L^1}\right\}\\
\le&{\bar C}^a2^b\max\left\{\left\|\fr{1}{c_i(0)+\delta}\right\|_{L^\infty}, e^{C_T T}\fr{|\Omega|}{c}\right\}
\eal
\ee
where the second inequality follows from (\ref{l1t}. Finally, passing to the limit $\delta\to 0^+$
gives
\be
\sup_{t\in[0,T]}\left\|\fr{1}{c_i(t)}\right\|_{L^\infty}\le {\bar C}^a2^b\max\left\{\fr{1}{c}, e^{C_T T}\fr{|\Omega|}{c}\right\},
\ee
and thus on any finite time interval $c_i(t)$ is uniformly bounded away from $0$ on $\bar\Omega$. This completes the proof of strict positivity of $c_i, i=1,2$ in the case of blocking boundary conditions.

In the case of mixed boundary conditions (\ref{di}), (\ref{2bl}), the strict positivity of $c_2$ is obtained as in the blocking case. For $c_1$, it is possible to obtain strict positivity along same lines as the preceding proof for blocking boundary conditions by choosing appropriate test functions, and in fact, this method generalizes easily to more complex boundary conditions for $c_i$ (e.g. blocking boundary conditions on a nontrivial boundary portion, Dirichlet boundary conditions on the complement, see \cite{ci, np3d}) and also to cases of more than two ionic species. Here, since $c_1$ satisfies purely Dirichlet boundary conditions and because we are considering the case of two oppositely charged ionic species, we argue using a maximum principle argument (see \cite{cil} for a similar argument in the context of upper bounds). As per the remark at the start of Section \ref{mbc}, we assume that $c_i,u,\Phi$ are all smooth in both space and time so that the following considerations are justified.

We know that on the interval $[0,T]$, there exists $c_T>0$ such that
\be
c_T\le c_2(t).\ee
Now fix $0<c'<\min\{c, c_T,\gamma_1\}$. Initially, we have, by definition of $c$, that $c_1(0)> c'$ on $\bar\Omega$. Suppose $c_1$ attains the value $c'$ at some time in between $t=0$ and $t=T$. Suppose $t'$ is the \textit{first} time when $c'$ is attained by $c_1$. Then, using (\ref{pois}), we write (\ref{np}) for $i=1$ as
\be
\pa_t c_1+u\cdot\na c_1=D_1\D c_1+D_1\na c_1\cdot\na \Phi-\fr{D_1}{\epsilon}c_1(c_1-c_2).
\ee
Evaluating the above at time $t'$, at the point $x_0\in\Omega$ where the minimal value $c'$ is attained, we find that
\be
\pa_t c_1(t',x_0)\ge -\fr{D_1}{\epsilon}c'(c'-c_T)>0.
\ee
However, by our choice of $t'$ and $x_0$, we must have $\pa_t c_1(t',x_0)\le 0$, so we have a contradiction. Thus, \be\inf_{[0,T]\times\bar\Omega}c_1\ge \min\{c,c_T,\gamma_1\}>0.\ee Since $[0,T]$ is an arbitrary finite time interval, we have shown that $c_1$ is also uniformly bounded away from 0 on every finite time interval. This completes the proof of the strict positivity of $c_i, i=1,2$ in the case of mixed boundary conditions.
\end{proof}

\vspace{.5cm}

{\bf{Acknowledgment.}} The author thanks the anonymous referees for their constructive comments.

\end{document}